\newcommand{\ov}{\overline}
\newcommand{\bN}{\mathbb{N}}
\newcommand{\bR}{\mathbb{R}}
\newcommand{\bZ}{\mathbb{Z}}
\newcommand{\bC}{\mathbb{C}}
\newcommand{\bT}{\mathbb{T}}
\newcommand{\cZ}{\mathcal{Z}}
\newcommand{\ee}{\varepsilon}
\newcommand{\dlim}{\underset{\to}{\lim}}
\newcommand{\act}{\curvearrowright}
\newcommand{\Tr}{\text{Tr}}
\newcommand{\ku}{K_1^{alg,u}}
\newcommand{\ka}{K_1^{\text{alg}}}
\newcommand{\hku}{\ov{K}_1^{alg,u}}
\newcommand{\hka}{\ov{K}_1^{\text{alg}}}
\newcommand{\tD}{\tilde{\Delta}}
\newcommand{\tr}{\text{tr}}
\newcommand{\id}{\text{id}}
\DeclareMathOperator{\Aff}{Aff}
\newtheorem{theorem}{Theorem}[section]
\newtheorem{prop}[theorem]{Proposition}
\newtheorem{cor}[theorem]{Corollary}
\newtheorem{lemma}[theorem]{Lemma}
\newtheorem{example}[theorem]{Example}
\newtheorem{remark}[theorem]{Remark}
\numberwithin{equation}{section}
\begin{document}

\renewcommand*{\thetheorem}{\Alph{theorem}}

\title{Polar decomposition in algebraic $K$-theory}
\author[Pawel Sarkowicz]{Pawel Sarkowicz}
\email[Pawel Sarkowicz]{\href{mailto:psark007@uottawa.ca}{psark007@uottawa.ca}}

\author[Aaron Tikuisis]{Aaron Tikuisis}
\email[Aaron Tikuisis]{\href{mailto:aaron.tikuisis@uottawa.ca}{aaron.tikuisis@uottawa.ca}}
\address{Department of Mathematics and Statistics, University of Ottawa, 75 Laurier Ave. East, Ottawa, ON, K1N 6N5 Canada}

\begin{abstract}
We show that the Hausdorffized algebraic $K$-theory of a C*-algebra decomposes naturally as a direct sum of the Hausdorffized unitary algebraic $K$-theory and the space of continuous affine functions on the trace simplex.
Under mild regularity hypotheses, an analogous natural direct sum decomposition holds for the ordinary (non-Hausdorffized) algebraic $K$-theory.
\end{abstract}

\maketitle

\tableofcontents{}

\section{Introduction}

$K$-theory is at the heart of understanding the structure of C*-algebras and their morphisms.
Indeed, the recently-established classification of simple separable nuclear C*-algebras shows that an invariant consisting of $K$-theory paired with traces fully determine C*-algebras up to isomorphism, among a large class of ``classifiable'' C*-algebras.
Moreover, a richer $K$-theoretic invariant, augmenting the aforementioned invariant by $K$-theory with coefficients and Hausdorffized unitary $K$-theory, classifies the morphisms between these classifiable C*-algebras, up to approximate unitary equivalence (\cite{CGSTW}).

The components of the richer $K$-theoretic invariant can be directly related, through split exact sequences, to the smaller invariant of $K$-theory and traces.
$K$-theory with $\mathbb Z/n$ coefficients relates to ordinary $K$-theory through the split exact sequence
\begin{equation} 0 \to K_i(A) \otimes \mathbb Z/n \to K_i(A; \mathbb Z/n) \to \mathrm{Tor}(K_{1-i}(A),\mathbb Z/n) \to 0, \end{equation}
where the connecting maps are induced by the Bockstein maps (see \cite[Propositions 1.8 and 2.4]{SchochetIV}).
While the maps in this exact sequence are natural, the splitting is not and cannot be -- as can be seen by the existence of homomorphisms which agree on $K$-theory but not on $K$-theory with coefficients.\footnote{An example is the tensor flip map $\sigma$ on $A:=\mathcal O_3\otimes \mathcal O_3$. Using the K\"unneth formula, one can compute that $K_0(A)=K_1(A)=\bZ/2$, and since the flip map is an order 2 automorphism, it must induce the identity on $K$-theory.
Suppose, for a contradiction, that $\sigma$ induces the identity on $K$-theory with coefficients.
One can see that $\text{Pext}(K_0(A),K_0(A))=0$ and so, since the algebra $A$ satisfies the UCT, it would follow that $[\sigma]=[\mathrm{id}_{A}]$ in $KK(A,A)$ by \cite{DadarlatLoring96} (see Proposition 2.4 in particular).
The Kirchberg--Phillips classification theorem (Theorem 8.3.3 (iii) of \cite{RordamBook}, for example) would then imply that $\sigma$ is approximately inner.
Hence, $\mathcal O_3$ would have approximately inner flip, but it doesn't by \cite{Tikuisis16,EndersSchemaitatTikuisis23}}

Similarly, Hausdorffized unitary $K$-theory, denoted $\ov{K}^{\mathrm{alg},u}_1(A)$, relates to ordinary $K$-theory and traces, through the split exact sequence
\begin{equation}
\label{eq:ThomsenSplitting} 0 \to \Aff T(A)/\overline{\rho(K_0(A))} \to \ov{K}^{\mathrm{alg},u}_1(A) \to K_1(A) \to 0,
\end{equation}
with the first map due to Thomsen (and intimately related to the de la Harpe--Skandalis determinant, as discussed below), and the second map arising as a natural quotient (see \cite[Theorem 3.2 and Corollary 3.3]{Thomsen95}).
Here, $\rho$ is the pairing map, so that $\rho(K_0(A))$ is the subgroup of $\Aff T(A)$ generated by the images of projections.
Again, these connecting maps are natural, though the splitting is not (see \cite[Section 5]{NielsenThomsen96} for example).

A natural question to ask is: why has only \emph{unitary} (Hausdorffized) algebraic $K$-theory been used in the classification theorem and (for the most part) related literature, as opposed to algebraic $K$-theories defined using all invertibles?
A superficial answer may be that, since C*-algebraists are used to working only with unitaries when defining $K_1$, it is natural to work with them in algebraic $K$-theory.
But this answer misses the point, and doesn't tell us whether the classification of morphisms theorem would look any different if all invertibles were used instead of unitaries, in the Hausdorffized algebraic $K$-theory component of the invariant.

In this article, we investigate the relation between (Hausdorffized and ordinary) algebraic $K$-theory defined using unitaries and using general invertible elements.
For topological $K$-theory, the relationship is entirely straightforward: polar decomposition expresses a general invertible as a unitary multiplied by a positive invertible element.
By effectively forgetting the positive invertible part, the unitary group is a strong deformation retract of the invertible group, and therefore the topological $K$-theory is the same whether one uses unitaries or invertibles.
To study algebraic $K$-theory, polar decomposition is once again the crucial tool.
However, since positive invertibles are non-trivial in (even Hausdorffized) algebraic $K$-theory, they need to be carefully accounted for.

Using the polar decomposition, we obtain a decomposition of Hausdorffized algebraic $K$-theory, as our first main result:

\begin{theorem}\label{thm:M1}
Let $A$ be any unital C*-algebra. Then there is a natural isomorphism of topological groups
\begin{equation} \ov{K}_1^{\mathrm{alg}}(A) \simeq \ov{K}_1^{\mathrm{alg,u}}(A) \oplus \Aff T(A). \end{equation}
\end{theorem}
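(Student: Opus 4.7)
\emph{Proof plan.} The plan is to construct explicit mutually inverse continuous homomorphisms
\[
  \Phi\colon \ov{K}_1^{\mathrm{alg}}(A) \rightleftarrows \ov{K}_1^{\mathrm{alg,u}}(A) \oplus \Aff T(A) \colon \Psi
\]
via polar decomposition. For $g\in GL_n(A)$ with polar decomposition $g=u|g|$ (so $u=g(g^*g)^{-1/2}$), set $\Phi([g]):=([u]_u,\widehat{\log|g|})$, where $[u]_u$ denotes the class of $u$ in $\ov{K}_1^{\mathrm{alg,u}}(A)$ and $\hat a(\tau):=\tau_n(a)$. Conversely, set $\Psi([u]_u,\hat a):=\iota([u]_u)+[\exp(a)]$, with $\iota\colon\ov{K}_1^{\mathrm{alg,u}}(A)\to\ov{K}_1^{\mathrm{alg}}(A)$ the natural map. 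Once well-definedness is established, the inverse identities $\Phi\Psi=\mathrm{id}$ and $\Psi\Phi=\mathrm{id}$ follow from $|u\exp(a)|=\exp(a)$, $u\exp(a)\exp(-a)=u$, and $g=u\exp(\log|g|)$; naturality in $A$ is immediate from the naturality of polar decomposition and continuous functional calculus.

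The second coordinate of $\Phi$ being a group homomorphism reduces, via $|gh|^2=h^*|g|^2h$ and cyclic invariance of $\tau$, to the Fuglede--Kadison-style identity $\tau(\log(a^{1/2}ba^{1/2}))=\tau(\log a)+\tau(\log b)$ for positive invertibles $a,b$. I would verify this by differentiating $f(t):=\tau(\log(a^{t/2}ba^{t/2}))$: the derivative-of-logarithm identity $\tfrac{d}{dt}\tau(\log X(t))=\tau(X(t)^{-1}X'(t))$ combined with trace-cyclicity yields $f'(t)=\tau(\log a)$, constant. The resulting map is continuous and abelian-valued, so descends to a continuous homomorphism $\ov{K}_1^{\mathrm{alg}}(A)\to\Aff T(A)$.

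The first coordinate of $\Phi$ is more delicate, as the polar-part map $p(g):=g(g^*g)^{-1/2}$ is not multiplicative. A direct computation reduces the homomorphism identity $[p(gh)]_u=[p(g)]_u+[p(h)]_u$ to the vanishing of $[p(qp)]_u$ in $\ov{K}_1^{\mathrm{alg,u}}(A)$ for all positive invertibles $q,p$. Since $p(qp)$ is connected to $1$, Thomsen's exact sequence reduces this further to the vanishing of the Thomsen invariant, which I would compute via de la Harpe--Skandalis integration along the concatenation of $\xi(t)=q^tp^t$ from $1$ to $qp$ (contributing $\tau(\log q)+\tau(\log p)$) with the retraction $\eta(s)=qp\cdot|qp|^{-s}$ from $qp$ to $p(qp)$ (contributing $-\tau(\log|qp|)=-(\tau(\log q)+\tau(\log p))$ by the Fuglede--Kadison identity applied to $q^2,p^2$), summing to $0$. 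An analogous de la Harpe--Skandalis computation along $\exp((1-t)a+tb)\exp(-b)$ verifies well-definedness of the section $\hat a\mapsto[\exp(a)]$ in $\Psi$: if $\hat a=\hat b$, the integral reduces to $\tau(b-a)=0$, yielding $\exp(a)\exp(-b)\in\overline{[GL_\infty(A),GL_\infty(A)]}$ (no $\rho(K_0(A))$ ambiguity appears on the positive side, since $\exp$ is injective on self-adjoints). Extension of $\Psi$ to all of $\Aff T(A)$ proceeds by continuity, using density of $\bigcup_n\widehat{M_n(A)_{sa}}$.

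The main obstacle is these de la Harpe--Skandalis path computations: one must carefully track how the ``positive'' Fuglede--Kadison contributions along canonical paths cancel against contributions from retracting to the unitary polar part. Once in hand, the remaining identities and continuity checks are formal verifications.
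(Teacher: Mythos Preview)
Your approach is correct and very close to the paper's: both prove the result via polar decomposition and the de la Harpe--Skandalis determinant, and your maps $\Phi,\Psi$ are exactly the isomorphism arising from the paper's split exact sequence $0\to\hku(A)\xrightarrow{\bar\iota}\hka(A)\xrightarrow{\bar\chi}\Aff T(A)\to 0$ with section $\bar s(\hat a)=[e^a]$.

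Two organizational differences are worth noting. First, the paper proves the trace-additivity $\widehat{\log|xy|}=\widehat{\log|x|}+\widehat{\log|y|}$ by the one-line observation $\mathrm{Re}(2\pi i\Delta^n(x))=2\pi i\Delta^n(|x|)=[\log|x|]$ and then taking real parts of $\Delta^n(xy)=\Delta^n(x)+\Delta^n(y)$; your Fuglede--Kadison differentiation is an equivalent but more hands-on route to the same identity. Second, the paper avoids your explicit $[p(qp)]_u=0$ computation entirely by working in the exact-sequence framework: once $\bar\chi$ is a homomorphism with $\ker\bar\chi=\mathrm{im}\,\bar\iota$ (the latter shown via Thomsen's identification $\ker\bar\Delta=CGL^0$) and $\bar s$ is a section, the projection onto $\hku(A)$ comes for free as $\mathrm{id}-\bar s\bar\chi$ followed by $\bar\iota^{-1}$, and this \emph{is} the polar-part map. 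Your direct verification that the polar-part map is a homomorphism is valid and perhaps more transparent, but the paper's packaging sidesteps it.

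One place you underestimate the work: calling the topological side ``formal verifications'' is too quick. Since $GL_\infty(A)$ with the inductive limit topology is not a topological group, checking that $\bar\iota,\bar\chi,\bar s$ are continuous and that $\bar\iota,\bar\chi$ are open onto their images requires genuine care with quotient and inductive limit topologies; the paper devotes a full proposition to this. Also, no density-and-extension argument is needed for the second coordinate of $\Psi$: the map $A_{sa}\to\Aff T(A)$, $a\mapsto\hat a$, is already surjective (via Cuntz--Pedersen and Kadison's representation theorem), so every element of $\Aff T(A)$ is $\hat a$ for some $a\in A_{sa}$.
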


Said differently, the Hausdorffized algebraic $K_1$-class of a positive invertible is determined entirely by tracial data, and the interaction between unitaries and positive invertibles is, at the level of Hausdorffized algebraic $K_1$, entirely trivial.
The de la Harpe--Skandalis determinant $\Delta$ (introduced in \cite{dlHS84a}), mapping from the connected component of the unitary group to $\Aff T(A)/\rho(K_0(A))$, provides the inverse of the first map in \eqref{eq:ThomsenSplitting} and plays a key role in our analysis.
This works because the kernel of this determinant (when viewed as a map into $\Aff T(A)/\overline{\rho(K_0(A))}$) consists \emph{exactly} of the closure of the commutator subgroup.

In the non-Hausdorffized setting, the kernel of the de la Harpe--Skandalis determinant (now going into $\Aff T(A)/\rho(K_0(A))$) could be strictly larger than the commutator subgroup (this time, without the closure).
Understanding when these two coincide is a well-studied problem (indeed, two problems, corresponding to the two components identified above for $\ov{K}^{\mathrm{alg}}(A)$), and it has been shown that they coincide under appropriate regularity hypotheses, and in particular for all classifiable C*-algebras.
We state our results for the non-Hausdorffized setting in a general form, without such regularity hypotheses; to do so, we need to work with the quotient of the invertibles (respectively unitaries) by the kernel of the determinant map $\Delta$.

\begin{theorem}\label{thm:M2}
  Let $A$ be any unital C*-algeba. Then there is a natural isomorphism
  \begin{equation} GL_{\infty}(A)/\mathrm{ker}\Delta \simeq U_{\infty}(A)/\mathrm{ker}\Delta|_{U^0_{\infty}(A)} \oplus \Aff T(A). \end{equation}
\end{theorem}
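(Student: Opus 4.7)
The strategy is to construct the isomorphism via polar decomposition, paralleling Theorem~\ref{thm:M1}. For each $g \in GL_\infty(A)$, write $g = u|g|$ with $u \in U_\infty(A)$ and $|g| = (g^*g)^{1/2}$ positive invertible, and define
$$ \Phi \colon GL_\infty(A) \longrightarrow U_\infty(A)/\ker\Delta|_{U^0_\infty(A)} \oplus \Aff T(A), \qquad g \mapsto ([u],\ \widehat{\log|g|}), $$
where $\widehat{b}(\tau) := \tau(b)$ for $b \in M_\infty(A)_{sa}$.

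To show $\Phi$ is a homomorphism, consider $g_i = u_i|g_i|$ and the polar decomposition $g_1 g_2 = u_{12}|g_1 g_2|$. Multiplicativity of the Fuglede--Kadison determinant, $\tau(\log|g_1 g_2|) = \tau(\log|g_1|) + \tau(\log|g_2|)$ for every $\tau \in T(A)$, provides additivity of the $\Aff T(A)$-coordinate. For the first coordinate, I would use that $\Delta$ is a group homomorphism on $GL^0_\infty(A)$, is conjugation-invariant by the trace property, and satisfies $\Delta(|h|) \equiv \widehat{\log|h|} \pmod{\rho(K_0(A))}$ for positive invertibles $h$. Combining these with the additivity above yields $\Delta(u_{12}) = \Delta(u_1 u_2)$, whence $u_{12}(u_1 u_2)^{-1} \in \ker\Delta|_{U^0_\infty(A)}$ and hence $[u_{12}] = [u_1 u_2]$. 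Conjugation-invariance also makes $\ker\Delta$ normal in $GL_\infty(A)$, so $\Phi$ descends to a homomorphism $\overline{\Phi}$ on $GL_\infty(A)/\ker\Delta$.

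The inverse would send $([u], f)$ to the class of $u\,h$, where $h$ is any positive invertible with $\widehat{\log h} = f$; uniqueness of the polar decomposition together with the identity $\Delta(u|g|) = \Delta(u) + \widehat{\log|g|} \pmod{\rho(K_0(A))}$ shows that this gives a two-sided inverse, once well-definedness is established. Naturality in $A$ follows from the naturality of both polar decomposition and $\widehat{\log}$, giving the natural isomorphism as claimed.

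The main obstacle is pinning down the definition of $\ker\Delta$ inside all of $GL_\infty(A)$ so that the $\Aff T(A)$-factor appears rather than the quotient $\Aff T(A)/\rho(K_0(A))$: concretely, positive-invertible classes in $GL_\infty(A)/\ker\Delta$ must be identified with all of $\Aff T(A)$ via $\widehat{\log}$, which requires the kernel to genuinely separate the unitary and positive contributions at the quotient level. Navigating this---together with the fact that the image of $M_\infty(A)_{sa}$ in $\Aff T(A)$ need not fill $\Aff T(A)$---is the delicate part, and is where the proof must depart from a purely formal polar-decomposition argument.
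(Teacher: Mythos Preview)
Your approach is essentially the paper's: both arguments use polar decomposition to split an invertible into a unitary part (landing in $U_\infty(A)/\ker\Delta|_{U_\infty^0(A)}$) and the map $x \mapsto \widehat{\log|x|}$ (landing in $\Aff T(A)$). The paper packages this as a split short exact sequence
\[
0 \to U_\infty(A)/\ker\Delta|_{U_\infty^0(A)} \xrightarrow{\iota} GL_\infty(A)/\ker\Delta \xrightarrow{\chi} \Aff T(A) \to 0,\qquad \chi([x])=\widehat{\log|x|},\ s(\hat a)=[e^a],
\]
first proving it on $GL_\infty^0(A)$ and then removing the connected-component restriction via a $3{\times}3$ diagram; your $\Phi$ is just the induced isomorphism.

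Two points where the paper supplies what your sketch leaves open. First, the identity you invoke as ``Fuglede--Kadison multiplicativity,'' namely $\widehat{\log|g_1g_2|}=\widehat{\log|g_1|}+\widehat{\log|g_2|}$, is proved in the paper from the de la Harpe--Skandalis determinant via the observation
\[
\text{Re}\big(2\pi i\,\Delta(x)\big)=2\pi i\,\Delta(|x|)=\widehat{\log|x|}\in\Aff T(A),
\]
which is well-defined because the ambiguity $\tD(\pi_1(GL_\infty^0(A)))$ lies in $\Theta(\Aff T(A))$ and is killed by $\text{Re}(2\pi i\,\cdot)$. This ``real part'' trick is exactly what resolves your worry about why the target is $\Aff T(A)$ rather than $\Aff T(A)/\rho(K_0(A))$: the positive-invertible contribution to $\Delta$ is purely imaginary relative to the $K_0$-indeterminacy, so taking $\text{Re}(2\pi i\,\cdot)$ separates them. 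Second, your other concern---that $\widehat{(\cdot)}$ might not surject onto $\Aff T(A)$---is a non-issue: the paper establishes in the preliminaries that $A_{sa}/A_0\simeq\Aff T(A)$ isometrically (Cuntz--Pedersen plus a Kadison-type density argument), so every element of $\Aff T(A)$ is $\hat a$ for some $a\in A_{sa}$, and then $s(\hat a)=[e^a]$ gives the splitting.

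One small slip: writing ``$\Delta(u_{12})=\Delta(u_1u_2)$'' is imprecise since $u_1,u_2,u_{12}$ need not lie in $U_\infty^0(A)$, where $\Delta$ is defined. What you really need is $u_{12}(u_1u_2)^{-1}\in\ker\Delta|_{U_\infty^0(A)}$; this element \emph{is} in $U_\infty^0(A)$ (polar-deform each $|g_i|$ to $1$), and its $\Delta$-value can then be computed via conjugation-invariance and the multiplicativity above. The paper avoids this bookkeeping by first handling $GL_\infty^0(A)$ and then appealing to the $3{\times}3$ diagram with columns governed by $\pi_0(U_\infty(A))\simeq\pi_0(GL_\infty(A))$.
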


\begin{cor}
Let $A$ be a unital, separable, simple C*-algebra which has stable rank one, is pure in the sense of \cite[Definition 3.6]{Winter12}, and such that every 2-quasitracial state on $A$ is a trace (in particular, $A$ may be any unital, separable, simple, finite, exact, $\mathcal Z$-stable C*-algebra).
Then there is a natural isomorphism
\begin{equation} K_1^{\mathrm{alg}}(A) \simeq K_1^{\mathrm{alg,u}}(A) \oplus \Aff T(A). \end{equation}
\end{cor}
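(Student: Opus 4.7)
The plan is to deduce the corollary directly from Theorem~\ref{thm:M2}. The regularity hypotheses on $A$ are chosen precisely so that the two quotients appearing in Theorem~\ref{thm:M2} can be identified with the algebraic $K_1$-groups: namely, $GL_\infty(A)/\ker\Delta$ with $K_1^{\mathrm{alg}}(A)$, and $U_\infty(A)/\ker\Delta|_{U^0_\infty(A)}$ with $K_1^{\mathrm{alg,u}}(A)$. Since the de la Harpe--Skandalis determinant always vanishes on commutators, there are natural surjections from each algebraic $K_1$-group onto the corresponding quotient from Theorem~\ref{thm:M2}; what must be shown is that, under the hypotheses, these surjections are isomorphisms, equivalently that
\[ \ker\Delta = [GL_\infty(A), GL_\infty(A)] \quad \text{and} \quad \ker\Delta|_{U^0_\infty(A)} = [U_\infty(A), U_\infty(A)]. \]

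For the unitary identification, I would invoke existing results on the unitary commutator subgroup problem. For simple, unital C*-algebras with stable rank one, pureness, and all 2-quasitraces being traces, the equality $\ker\Delta|_{U^0_\infty(A)} = [U_\infty(A), U_\infty(A)]$ has been established in the work of Thomsen, Nielsen--Thomsen, and subsequent authors on the structure of the unitary group in regular C*-algebras. Citing the appropriate theorem yields $U_\infty(A)/\ker\Delta|_{U^0_\infty(A)} \simeq K_1^{\mathrm{alg,u}}(A)$.

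For the invertible identification, I would exploit the polar decomposition that underlies Theorem~\ref{thm:M2}. Under the decomposition of that theorem, an invertible $g = u|g|$ corresponds to the pair formed by the class of $u$ in the unitary quotient together with $\tau(\log|g|) \in \Aff T(A)$. Hence $g \in \ker\Delta$ if and only if $u \in \ker\Delta|_{U^0_\infty(A)}$ and $\tau(\log|g|)$ vanishes in $\Aff T(A)/\rho(K_0(A))$. The unitary part is handled by the previous paragraph, so the remaining question is purely a positive-invertible one: every positive invertible $h$ with $\tau(\log h) = 0$ must be shown to lie in $[GL_\infty(A), GL_\infty(A)]$. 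Under pureness and stable rank one, the self-adjoint element $\log h$ can be decomposed as a finite sum of commutators of self-adjoints, and a careful exponentiation/integration argument (using the path $h^t$) converts this into an expression of $h$ as a product of commutators of positive invertibles in $GL_\infty(A)$.

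The main obstacle will be this last step. The unitary analogue --- that exponentials of traceless self-adjoints are commutators of unitaries --- is classical under these regularity hypotheses, but its positive-invertible counterpart may need a separate treatment based on functional calculus for self-adjoint elements, combined with the flexibility provided by pureness to split the relevant self-adjoint element into small enough pieces to be written as commutators. Naturality of the resulting isomorphism is inherited directly from the naturality of Theorem~\ref{thm:M2} and from the fact that both $\ker\Delta$ and the relevant commutator subgroups are defined naturally in $A$.
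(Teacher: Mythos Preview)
Your overall strategy is right and matches the paper: reduce to Theorem~\ref{thm:M2} (equivalently Corollary~\ref{Ktheorysplitting}) by verifying that $\ker\Delta|_{U_\infty^0(A)}=DU_\infty^0(A)$ and $\ker\Delta=DGL_\infty^0(A)$ under the stated hypotheses. The paper does exactly this, but handles both identifications in one stroke by citing \cite{NgRobert17}, where it is shown that \emph{both} the unitary and the general linear kernel-equals-commutator conditions hold for unital, separable, simple, pure C*-algebras of stable rank one in which every 2-quasitrace is a trace. No separate treatment of positive invertibles is needed.

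Your proposal, by contrast, cites the literature only for the unitary case and then attempts a do-it-yourself reduction for the $GL$ case: split $g=u|g|$, handle $u$ via the unitary result, and argue directly that a positive invertible $h$ with $\widehat{\log h}=0$ lies in $DGL_\infty(A)$. You correctly flag this last step as the obstacle, and indeed you do not carry it out; this is the gap. (Incidentally, in your decomposition the condition on the positive part is that $\widehat{\log|g|}$ vanish in $\Aff T(A)$ itself, not merely in $\Aff T(A)/\rho(K_0(A))$: since $\tD(\pi_1(GL_\infty^0(A)))\subseteq\Theta(\Aff T(A))$ sits in the ``real'' direction while $\Delta(|g|)=\frac{1}{2\pi i}[\log|g|]$ is purely ``imaginary'', no $K_0$-ambiguity enters.) The fix is simply to cite \cite{NgRobert17} for the $GL$ condition as well, after which the corollary follows immediately from Corollary~\ref{Ktheorysplitting}.
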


The paper is structured as follows. First we discuss preliminaries and notation in Section \ref{prelim}. We introduce each of the variants of the de la Harpe--Skandalis determinant and discuss some relationships between kernels. In Section \ref{section:polar-decomposition}, we prove the main results (Theorems \ref{thm:M1} and \ref{thm:M2}). In Section \ref{section:pdak-nonstable-algebraic-k-theory} we look at non-stable analogues of the results in \ref{section:polar-decomposition}, under the hypothesis of certain $K$-theoretic regularity conditions. 

It was pointed out that variations of some of our main results were proved as \cite[Theorem 5]{Elliott22}. We use different techniques and exposition.

\addtocontents{toc}{\protect\setcounter{tocdepth}{0}}
\section*{Acknowledgements}
Thanks to George Elliott, Thierry Giordano, Chris Shafhauser, and Stuart White for helpful discussions.
Thanks to the other authors of \cite{CGSTW} for agreeing to share the draft of that article with all authors.

\addtocontents{toc}{\protect\setcounter{tocdepth}{1}}
\section{Preliminaries and notation}\label{prelim}

\renewcommand*{\thetheorem}{\arabic{section}.\arabic{theorem}}

\subsection{Notation} 
For a group $G$, we denote by $DG$ the derived subgroup of $G$, i.e.,
\begin{equation} DG := \langle ghg^{-1}h^{-1} \mid g,h \in G \rangle. \end{equation}
If $G$ has an underlying topology, we denote by $CG$ the closure of $DG$ and $G^0$ the connected component of the identity. 

For a unital C*-algebra $A$, $GL(A)$ denotes the general linear group of $A$, while $GL^0(A)$ denotes the connected component of $GL(A)$. For $n \in \bN$, we write $GL_n(A) := GL(M_n(A))$, $GL_n^0(A) := GL^0(M_n(A))$, and we set
\begin{equation} GL_{\infty}(A) := \dlim\, GL_n(A), \end{equation}
with connecting maps $GL_n(A) \ni x \mapsto x \oplus 1 \in GL_{n+1}(A)$. 
This makes $GL_\infty(A)$ both a topological space (with the inductive limit topology) and a group.\footnote{It is not, however, a topological group. This is addressed in a footnote in \cite{CGSTW}.}
We have unitary analogues by replacing $GL$ with $U$, where $U(A)$ denotes the group of unitaries of $A$. 
Similarly, we define $M_{\infty}(A) = \dlim \, M_n(A)$ (as an algebraic direct limit) with connecting maps $x \mapsto x \oplus 0$. If $E$ is Banach space and $\tau: A \to E$ is a linear map that is tracial (i.e., $\tau(ab) = \tau(ba)$ for all $a,b \in A$), we extend this canonically to $M_{\infty}(A)$ by setting $\tau((a_{ij})) := \sum_i \tau(a_{ii})$ for $(a_{ij})\in M_n(A)$. 

We write $\pi_1(X)$ for the fundamental group of a topological space $X$ with distinguished base point.
(In this paper, we will only use $X$ equal to $U^0_n(A)$ or $GL^0_n(A)$, and in either case, we use the unit element as the base point.)

For a C*-algebra $A$, we let $K_0(A),K_1(A)$ be the topological $K$-groups of $A$. 
The set of tracial states on $A$ will be denoted $T(A)$, which is a Choquet simplex (\cite[Theorem 3.1.18]{Sakaibook}).
We denote by $\Aff T(A)$ the set of continuous affine functions $T(A) \to \bR$, which is an interpolation group with order unit (see \cite[Chapter 11]{Goodearlbook}).
The space $\Aff T(A)$ can also be viewed as a topological group under addition, equipped with the uniform norm topology.
For unital $A$, the pairing map $\rho_A: K_0(A) \to \Aff T(A)$ is defined as follows: if $x \in K_0(A)$, we can write $x = [p] - [q]$ where $p,q \in M_n(A)$ are projections, and then define
\begin{equation} \rho_A(x)(\tau) := \tr_n \otimes \tau (p - q), \ \ \ \tau \in T(A). \end{equation}
We will denote by $\ku(A),\ka(A)$ the unitary algebraic $K_1$-group and the algebraic $K_1$-group respectively:
\begin{equation} \ku(A) := U_{\infty}(A)/DU_{\infty}(A) \text{ and } \ka(A) := GL_{\infty}(A)/DGL_{\infty}(A). \end{equation}
For the Hausdorffized variants we write
\begin{equation} \hku(A) := U_{\infty}(A)/CU_{\infty}(A) \text{ and } \hka(A) := GL_{\infty}(A)/CGL_{\infty}(A), \end{equation}
where the closures $CU_{\infty}(A) = \ov{DU_{\infty}(A)}$ and $CGL_{\infty}(A) = \ov{DGL_{\infty}(A)}$ are taken with respect to the inductive limit topologies.
That is, $CU_\infty(A) = \dlim\, CU_n(A)$ and $CGL_\infty(A) = \dlim\, CGL_n(A)$. We note that $\hku(A)$ and $\hka(A)$ are topological groups (despite the fact that $U_{\infty}(A)$ and $GL_{\infty}(A)$ are not in general) -- this is addressed in \cite[Remark 2.11]{CGSTW}.

\subsection{The de la Harpe--Skandalis determinant}\label{section:pdak-dlHS-det}

We recall the definition of the de la Harpe--Skandalis determinant \cite{dlHS84a} (see \cite{dlHarpe13} for a more in-depth exposition). Let $\Tr_A: A \to A/\ov{[A,A]}$ be the quotient map from $A$ to the quotient Banach space $A/\ov{[A,A]}$ where $\ov{[A,A]}$ is the closed linear span of additive commutators. We call $\Tr_A$ the universal trace on $A$ and will usually omit the subscript when the C*-algebra is clear from context. For $n \in \bN \cup \{\infty\}$ and a piece-wise smooth path $\xi: [0,1] \to GL_n(A)$, set
\begin{equation} 
\label{eq:predetDefn}
\tD^n(\xi) := \frac{1}{2\pi i} \int_0^1 \Tr(\xi'(t)\xi(t)^{-1})dt.\footnote{We note that when $n=\infty$, by compactness, the image of $\xi$ is contained in some $GL_m(A)$ for $m<\infty$, so we can take the trace.} \end{equation}
We call the map $\tD^n$ the \emph{pre-determinant}. The following properties can be found as \cite[Lemme 1]{dlHS84a}. 

\begin{prop}\label{prop:detFacts}
The map $\tD^n$ which takes a piece-wise smooth path to an element in $A/\ov{[A,A]}$ has the following four properties:
  \begin{enumerate}
    \item it takes pointwise products to sums: if $\xi_1,\xi_2$ are two piece-wise smooth paths, then
    \begin{equation} \tD^n(\xi_1\xi_2) = \tD^n(\xi_1) + \tD^n(\xi_2), \end{equation}
    where $\xi_1\xi_2$ is the piece-wise smooth path $t \mapsto \xi_1(t)\xi_2(t)$ from $\xi_1(0)\xi_2(0)$ to $\xi_1(1)\xi_2(1)$;
  \item if $\|\xi(t) - 1\| < 1$ for all $t \in [0,1]$, then
  \begin{equation} 2\pi i \tD^n(\xi) = \Tr\big(\log(\xi(1)) - \log\xi(0)\big);
  \end{equation}
    \item $\tD^n(\xi)$ depends only on the homotopy class of $\xi$;
    \item if $p \in M_n(A)$ is an idempotent, then the path $\xi_p: [0,1] \to GL_n^0(A)$ given by $\xi_p(t) := pe^{2\pi i t} + (1-p)$ satisfies $\tD^n(p) = \Tr(p)$. \label{predetprojection}
  \end{enumerate}
\end{prop}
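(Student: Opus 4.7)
My plan is to verify each of the four properties by direct calculation, with the key tool in each case being the trace identity $\Tr(ab) = \Tr(ba)$ in the quotient $A/\overline{[A,A]}$. Items (1), (2), and (4) reduce to routine computations, while (3) is the substantive step that I would single out.

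For (1), I would apply the product rule to obtain $(\xi_1\xi_2)'(\xi_1\xi_2)^{-1} = \xi_1'\xi_1^{-1} + \xi_1 \xi_2'\xi_2^{-1} \xi_1^{-1}$; cyclicity of the universal trace reduces the second summand to $\xi_2'\xi_2^{-1}$, and integrating in $t$ yields additivity. For (2), I would expand $\log \xi(t)$ as the convergent power series $\sum_{k\geq 1} \tfrac{(-1)^{k+1}}{k} y(t)^k$ with $y(t) := \xi(t) - 1$, differentiate term by term, and use cyclicity to collapse each $\tfrac{d}{dt}(y^k)$ to $k\,\Tr(y^{k-1} y')$; the resulting series sums to $\Tr(\xi^{-1}\xi')$, and the fundamental theorem of calculus closes the case. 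Property (4) is a direct substitution: $\xi_p'(t)\xi_p(t)^{-1}$ simplifies to $2\pi i p$ using $p^2 = p$ and $p(1-p) = 0$, so the integrand is constant.

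For (3), I would argue variationally. Given a smooth homotopy $H : [0,1]^2 \to GL_n(A)$ with fixed endpoints, set $X := \partial_t H \cdot H^{-1}$ and $Y := \partial_s H \cdot H^{-1}$. A routine differentiation, using equality of mixed partials and $\partial(H^{-1}) = -H^{-1}(\partial H)H^{-1}$, gives $\partial_s X - \partial_t Y = YX - XY$, a commutator, hence zero under $\Tr$. Integrating $t$ over $[0,1]$ yields
\begin{equation}
\frac{d}{ds} \tD^n(\xi_s) = \frac{1}{2\pi i}\bigl(\Tr(Y(1,s)) - \Tr(Y(0,s))\bigr),
\end{equation}
which vanishes because $H$ is constant in $s$ at $t = 0,1$.

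The main obstacle I anticipate is that the hypothesis is only \emph{piece-wise} smoothness of both paths and homotopies, so the variational step above needs a little care. I would handle this either by smoothing a piece-wise smooth homotopy and appealing to continuity of $\tD^n$ on each smooth piece, or alternatively by invoking (2) on a sufficiently fine subdivision of the homotopy square: choose a mesh so that on each subrectangle the image of $H$ lies in a ball of radius less than $1$ around some invertible $u$, left-translate by $u^{-1}$ to reduce to the hypothesis of (2), and then recombine using additivity (1) to conclude that the boundary contributions cancel.
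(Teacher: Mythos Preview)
Your proposal is correct; each of the four arguments is the standard one and goes through as you describe. The paper itself does not prove this proposition at all: it simply records the four properties and cites \cite[Lemme 1]{dlHS84a}, so you have supplied considerably more detail than the paper does, and your sketch is essentially the content of that original reference.
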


  The de la Harpe--Skandalis determinant (at the $n^{\text{th}}$ level) is then the map
  \begin{equation} \Delta^n: GL_{\infty}^0(A) \to \left(A/\ov{[A,A]}\right)/\tD^n(\pi_1(GL_n^0(A))) \end{equation}
  given by $\Delta^n(x) := [\tD^n(\xi_x)]$ where $\xi_x$ is any piece-wise smooth path $\xi_x: [0,1] \to GL_n^0(A)$ from 1 to $x$. This is a group homomorphism to an abelian group, and therefore factors through the derived group, i.e., $DGL_n^0(A) \subseteq \ker \Delta^n$. For the case $n = \infty$, we just write $\tD$ and $\Delta$ for $\tD^{\infty}$ and $\Delta^{\infty}$ respectively. If the C*-algebra needs to be specified, we will write $\Delta^n_A$ or $\Delta_A$.

    \begin{remark}\label{rem:cts-to-piece-wise-smooth}\mbox{}
    \begin{enumerate}
    \item Every continuous path $[0,1] \to GL_n(A)$ is homotopic to a piece-wise smooth path (even a piece-wise smooth exponential path if we are in $GL_n^0(A)$), and as $\tD^n$ is homotopy-invariant, it makes sense to apply $\tD^n$ to any continuous path. Indeed, as in the proof of \cite[Lemme 3]{dlHS84a}, take any continuous path $\xi: [0,1] \to GL_n(A)$ and choose $k$ such that
    \begin{equation}
    \|\xi(\frac{j-1}{k})^{-1}\xi(\frac{j}{k}) - 1\| < 1 \text{ for all } j=1,\dots,k.
    \end{equation}
    Then taking $a_j := \frac{1}{2\pi i}\log \big(\xi(\frac{j-1}{k})^{-1}\xi(\frac{j}{k})\big), j=1,\dots,k$, $\xi$ will be homotopic to the path
    \begin{equation}
    \eta(t) = \xi\left(\frac{j-1}{k}\right)e^{(kt - j)a_j}, t \in \left[\frac{j-1}{k},\frac{j}{k}\right].
    \end{equation}
    We note that if $a = \sum_{j=1}^k a_j$, then $\tD^n(\xi) = \tD^n(\eta) = \Tr(a)$. If $\xi$ is a path of unitaries, then so is $\eta$ and the $a_j$'s are self-adjoint.

    \item   Unless we make any regularity assumptions, the maps $\Delta^n$ may have different codomains as $n$ varies since the images $\tD^n(\pi_1(GL_n^0(A)))$ may vary.
We do however always have 
  \begin{equation} \tD^n(\pi_1(GL_n^0(A))) \subseteq \tD^{n+1}(\pi_1(GL_{n+1}^0(A))) \end{equation}
  since $\tD^{n+1}(\xi \oplus 1) = \tD^n(\xi)$ whenever $\xi$ is a piece-wise smooth loop in $GL_n^0(A)$. 
  However, when the canonical map $\pi_1(GL^0(A)) \to K_0(A)$ is surjective, we have that $\Delta^n = \Delta|_{GL_n^0(A)}$. 
    \end{enumerate}
    \end{remark}

  We note that $\pi_1(GL_{\infty}^0(A)) \simeq K_0(A)$ canonically via the map induced by $[\xi_p] \mapsto p$, where $\xi_p$ is the path in property (\ref{predetprojection}) above, and consequently $\Delta$ can be thought of as a map
  \begin{equation} GL_{\infty}^0(A) \to \left(A/\ov{[A,A]}\right)/\Tr(K_0(A)). \end{equation}

  Let $A_0$ consist of elements $a \in A_{sa}$ satisfying $\tau(a) = 0$ for all $\tau \in T(A)$.
  This is a norm-closed real subspace of $A_{sa}$ such that $A_0 \subseteq \ov{[A,A]}$, and there is an isometric identification $A_{sa}/A_0 \simeq \Aff T(A)$ sending an element $[a]$ to $\widehat{a}$, where $\widehat{a}(\tau) := \tau(a)$.
  Indeed, it is not hard to see that the map $A_{sa}/A_0 \to \Aff T(A)$ given by $[a] \mapsto \hat{a}$ is a well-defined $\bR$-linear map.
  Moreover \cite[Theorem 2.9]{CuntzPedersen79}, together with a convexity argument, gives that this is isometric identification.
  To see surjectivity, we note that the image of this map contains constant functions and separates points, so \cite[Corollary 7.4]{Goodearlbook} gives that the image is dense, and therefore all of $\Aff T(A)$ (since this is an isometry).
  We freely identify $A_{sa}/A_0$ with $\Aff T(A)$.

  \begin{lemma}
    Let $A$ be a unital C*-algebra.
The canonical map $\Theta: \Aff T(A) \to A/\ov{[A,A]}$ given by $\Theta(\hat{a}) := [a]$ is an $\bR$-linear isometry. 
    \end{lemma}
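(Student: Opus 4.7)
The plan is to observe that $\Theta$ factors as the composition of the isometric identification $\Aff T(A) \simeq A_{sa}/A_0$ (established in the paragraph preceding the statement) with the canonical $\bR$-linear map $\iota: A_{sa}/A_0 \to A/\ov{[A,A]}$ sending $[a]_{A_0} \mapsto [a]_{\ov{[A,A]}}$. Since $A_0 \subseteq \ov{[A,A]}$ has already been noted, $\iota$ is well-defined, and the $\bR$-linearity of $\Theta$ is then immediate. The only real content of the lemma is that $\iota$ (equivalently, $\Theta$) is isometric.

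For one direction, since the quotient norm on $A/\ov{[A,A]}$ is an infimum over a larger set of representatives than that defining the norm on $A_{sa}/A_0$, I get immediately that
\begin{equation}
\|\Theta(\widehat{a})\|_{A/\ov{[A,A]}} \;=\; \inf_{x \in \ov{[A,A]}} \|a - x\| \;\leq\; \inf_{y \in A_0} \|a - y\| \;=\; \|\widehat{a}\|_{\Aff T(A)}
\end{equation}
for every $a \in A_{sa}$.

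For the reverse inequality, I would use that every $\tau \in T(A)$ vanishes on $[A,A]$ and is norm-continuous, hence vanishes on $\ov{[A,A]}$. Consequently each $\tau$ descends to a bounded linear functional on $A/\ov{[A,A]}$ of norm at most $\|\tau\| = 1$. Therefore, for $a \in A_{sa}$ and any $x \in \ov{[A,A]}$,
\begin{equation}
|\tau(a)| \;=\; |\tau(a - x)| \;\leq\; \|a - x\|,
\end{equation}
so taking the infimum over $x$ and the supremum over $\tau$ yields
\begin{equation}
\|\widehat{a}\|_{\Aff T(A)} \;=\; \sup_{\tau \in T(A)} |\tau(a)| \;\leq\; \|\Theta(\widehat{a})\|_{A/\ov{[A,A]}}.
\end{equation}
Combining the two inequalities gives the isometry property.

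I do not anticipate any serious obstacle here: the substantive work was already done in the Cuntz--Pedersen style identification $A_{sa}/A_0 \simeq \Aff T(A)$ recalled just above the lemma, and the rest is a short comparison of two quotient norms, using only that traces vanish on closed commutators and have norm one.
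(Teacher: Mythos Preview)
Your proof is correct and follows essentially the same approach as the paper: both reduce to comparing the quotient norms on $A_{sa}/A_0$ and $A/\ov{[A,A]}$, using $A_0 \subseteq \ov{[A,A]}$ for one inequality and the fact that tracial states vanish on $\ov{[A,A]}$ for the other. Your argument is in fact slightly more direct, since the paper routes the second inequality through a Hahn--Banach identification of $\inf_{x \in \ov{[A,A]}}\|a+x\|$ with $\sup\{|\tau(a)| : \tau \text{ tracial, } \|\tau\|=1\}$, whereas you only need the trivial direction that each $\tau \in T(A)$ gives a norm-one functional annihilating $\ov{[A,A]}$.
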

        \begin{proof}
          Identifying $\Aff T(A)$ with $A_{sa}/A_0$, $\Theta$ is the map $[a]_{A_{sa}/A_0} \mapsto [a]_{A/\ov{[A,A]}}$. Clearly this is $\bR$-linear if it is well-defined, and it is well-defined since $A_0 \subseteq \ov{[A,A]}$. To show that it's isometric, we have the following chain of inequalities. For $a \in A_{sa}$,
            \begin{equation}
\begin{split}
              \sup_{\tau \in T(A)}|\tau(a)| &\leq \sup_{\tau \text{ s.a, tracial, }\|\tau\| = 1}|\tau(a)| \\
                                            &\leq \sup_{\tau \text{ tracial, }\|\tau\| = 1} |\tau(a)| \\
                                            &= \inf_{x \in \ov{[A,A]}}\|a + x\| \\
                                            &\leq \inf_{x \in A_0} \|a + x\| \\
                                            &= \sup_{\tau \in T(A)}|\tau(a)|,
            \end{split}
\end{equation}
            where the inequalities are obvious, 
            the equality on the third line comes from a standard Hahn--Banach argument, and the last equality comes from our isometric identification $A_{sa}/A_0 \simeq \Aff T(A)$. 
        \end{proof}

        Consequently, we can think of the map $\Tr|_{A_{sa}}$ as the map from $A_{sa} \to \Aff T(A)$ ($\simeq A_{sa}/A_0$) given by $a \mapsto \hat{a}$. 

      \begin{lemma}
Let $A$ be a unital C*-algebra.
        For $n \in \bN \cup \{\infty\}$,
        \begin{equation} \tD^n(\pi_1(GL_n^0(A))) = \tD^n(\pi_1(U_n^0(A))) \subseteq \Theta(\Aff T(A)). \end{equation} 
      \end{lemma}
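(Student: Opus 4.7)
The plan is to establish the two claims separately. Since $U_n^0(A) \subseteq GL_n^0(A)$, the inclusion $\tD^n(\pi_1(U_n^0(A))) \subseteq \tD^n(\pi_1(GL_n^0(A)))$ is immediate; the real content is in the reverse inclusion and in the containment in $\Theta(\Aff T(A))$.

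For the reverse inclusion, I would use the polar decomposition to homotope loops of invertibles to loops of unitaries. Explicitly, define $H: [0,1] \times GL_n^0(A) \to GL_n^0(A)$ by $H(s,x) := x(x^*x)^{-s/2}$. This is a strong deformation retract onto $U_n^0(A)$ which fixes the base point $1$ throughout (since $1 \cdot (1^*1)^{-s/2} = 1$), so given any loop $\xi$ in $GL_n^0(A)$ based at $1$, the family $s \mapsto H(s, \xi(\cdot))$ is a loop-homotopy ending in the unitary loop $t \mapsto \xi(t)(\xi(t)^*\xi(t))^{-1/2}$. Since $\tD^n$ depends only on the homotopy class (Proposition \ref{prop:detFacts}(3)), the images of $\pi_1(GL_n^0(A))$ and $\pi_1(U_n^0(A))$ under $\tD^n$ agree.

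For the containment in $\Theta(\Aff T(A))$, let $\xi: [0,1] \to U_n^0(A)$ be a continuous loop based at $1$. By Remark \ref{rem:cts-to-piece-wise-smooth}(1), after replacing $\xi$ by a homotopic piecewise-exponential path, we may write $\tD^n(\xi) = \Tr(a)$ where $a := \sum_{j=1}^k a_j$ and each $a_j = \tfrac{1}{2\pi i}\log\big(\xi(\tfrac{j-1}{k})^*\xi(\tfrac{j}{k})\big)$ is self-adjoint (since the logarithm of a unitary near $1$ is $i$ times a self-adjoint element, as explicitly noted in Remark \ref{rem:cts-to-piece-wise-smooth}(1)). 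Thus $a \in M_n(A)_{sa}$, and the preceding lemma together with the identification of $\Tr|_{A_{sa}}$ with $a \mapsto \widehat{a}$ recorded immediately afterwards gives $\tD^n(\xi) = \Tr(a) = \Theta(\widehat{a}) \in \Theta(\Aff T(A))$.

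The main subtlety is ensuring that the polar-decomposition deformation is a loop-homotopy, i.e., that the base point $1$ is preserved at every stage; this is immediate from the definition of $H$. Beyond this, the argument is essentially an assembly of the homotopy invariance and piecewise-exponential approximation already furnished by Proposition \ref{prop:detFacts} and Remark \ref{rem:cts-to-piece-wise-smooth}.
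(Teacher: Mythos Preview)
Your argument is correct. The first part (the equality) is exactly the paper's argument, just with the retraction $H(s,x)=x(x^*x)^{-s/2}$ written out explicitly rather than invoked in one line.

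For the containment in $\Theta(\Aff T(A))$ you take a genuinely different route from the paper. The paper works directly with the integral: it observes (citing \cite[Proposition~1.4]{Phillips92}) that for a piecewise smooth unitary path $\xi$ the logarithmic derivative $\xi'(t)\xi(t)^{-1}$ is skew-adjoint, so $\tfrac{1}{2\pi i}\xi'(t)\xi(t)^{-1}$ is self-adjoint and hence $\Tr$ of it lies in $\Theta(\Aff T(A))$ for every $t$; then the integral stays in this closed real subspace. Your approach instead discretizes via Remark~\ref{rem:cts-to-piece-wise-smooth}(1), replacing the loop by a piecewise exponential so that $\tD^n(\xi)=\Tr(a)$ with $a=\sum_j a_j$ self-adjoint, and concludes $\Tr(a)\in\Theta(\Aff T(A))$. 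Both arguments are short; yours is slightly more self-contained (no external citation needed), while the paper's is marginally more direct conceptually, working pointwise with the integrand rather than passing to an approximation. One small notational point: your $a$ lives in $M_n(A)_{sa}$, so $\widehat{a}$ should be read as $\widehat{\sum_i a_{ii}}$, but this is harmless.
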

          \begin{proof}
            As $U_n(A)$ is a retract of $GL_n(A)$, the first equality is clear by Proposition \ref{prop:detFacts}(3). Now suppose we have a piece-wise smooth loop $\xi: [0,1] \to U_n^0(A)$. By \cite[Proposition 1.4]{Phillips92},  $\xi'(t)\xi(t)^{-1}$ is skew-adjoint so that $\frac{1}{2\pi i}\xi'(t)\xi(t)^{-1}$ is self-adjoint. Therefore $\Tr(\frac{1}{2\pi i} \xi'(t)\xi(t)^{-1}) \in \Theta(\Aff T(A))$ and, since $\Theta(\Aff T(A))$ is a closed real subspace,
            \begin{equation} \tD^n(\xi) = \int_0^1\Tr\left(\frac{1}{2\pi i}\xi'(t)\xi(t)^{-1} \right) dt \in \Theta(\Aff T(A)) \end{equation}
            as well. 
          \end{proof}

      \begin{cor}
Let $A$ be a unital C*-algebra.
        For $n \in \bN \cup \{\infty\}$ and $u \in U_n^0(A)$,
        \begin{equation} \Delta^n(u) \in \Theta(\Aff T(A))/\tD^n(\pi_1(U_n^0(A))). \end{equation}
      \end{cor}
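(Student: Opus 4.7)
The plan is to reduce the corollary directly to the preceding lemma by realizing $\Delta^n(u)$ as coming from a path that stays in the unitary group. Concretely, since $u$ lies in the connected component $U_n^0(A)$, I would first choose a continuous, and then (using Remark~\ref{rem:cts-to-piece-wise-smooth}) a piece-wise smooth path $\xi \colon [0,1] \to U_n^0(A)$ from $1$ to $u$. Because $\Delta^n$ is defined modulo $\tD^n(\pi_1(GL_n^0(A)))$ and the pre-determinant only depends on the homotopy class of the path in $GL_n^0(A)$, the class $\Delta^n(u)$ equals $[\tD^n(\xi)]$.

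Next, I would invoke the preceding lemma in two ways. First, its proof (via \cite[Proposition 1.4]{Phillips92}) shows that for a piece-wise smooth path of unitaries, $\tfrac{1}{2\pi i}\xi'(t)\xi(t)^{-1}$ is self-adjoint for each $t$, so that $\Tr\bigl(\tfrac{1}{2\pi i}\xi'(t)\xi(t)^{-1}\bigr) \in \Theta(\Aff T(A))$; since $\Theta(\Aff T(A))$ is a closed real subspace, the integral
\begin{equation}
\tD^n(\xi) = \int_0^1 \Tr\!\left(\tfrac{1}{2\pi i}\xi'(t)\xi(t)^{-1}\right)dt
\end{equation}
lies in $\Theta(\Aff T(A))$ as well. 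Second, the lemma gives $\tD^n(\pi_1(GL_n^0(A))) = \tD^n(\pi_1(U_n^0(A))) \subseteq \Theta(\Aff T(A))$, so the quotient $\Theta(\Aff T(A))/\tD^n(\pi_1(U_n^0(A)))$ makes sense and embeds naturally into $(A/\ov{[A,A]})/\tD^n(\pi_1(GL_n^0(A)))$. Combining these, the class $[\tD^n(\xi)] = \Delta^n(u)$ lies in the image of this subgroup, which is the statement to prove.

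There is no real obstacle here; the corollary is essentially a packaging of the previous lemma applied to a path of unitaries rather than a loop. The only mild point worth stating explicitly is the use of Remark~\ref{rem:cts-to-piece-wise-smooth} to justify choosing a piece-wise smooth path inside $U_n^0(A)$ (so that the skew-adjointness argument applies pointwise), together with the fact that $\Theta(\Aff T(A))$ is closed under integration — both of which are already handled in the preceding lemma.
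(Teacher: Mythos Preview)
Your proposal is correct and is exactly the argument the paper intends: the corollary is stated without proof because it follows immediately from the preceding lemma by applying the same skew-adjointness computation to a piece-wise smooth path of unitaries from $1$ to $u$ (rather than to a loop), together with the identification $\tD^n(\pi_1(GL_n^0(A))) = \tD^n(\pi_1(U_n^0(A)))$. Your invocation of Remark~\ref{rem:cts-to-piece-wise-smooth} to stay inside $U_n^0(A)$ is precisely the right justification for the one step the paper leaves implicit.
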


For $[x] \in A/\ov{[A,A]}$, we'll write
\begin{equation} \text{Re}([x]) := [\text{Re}(x)] = \Theta \left(\widehat{\frac{x + x^*}{2}}\right) \in \Theta(\Aff T(A)), \end{equation}
which is well-defined since $\ov{[A,A]}$ is closed under adjoints.
    Note that $\text{Re}(i [x]) = 0$ if $[x] \in \Theta(A_{sa}/A_0)$, and therefore $\text{Re}(i\Delta^n(\cdot)):GL_n^0(A) \to A_{sa}/A_0$ is well-defined. With this, we have the following fact:
      \begin{equation}
\label{eq:RealPart}
      \text{Re}(2\pi i \Delta^n (x)) = 2\pi i\Delta^n(|x|) = [\log|x|].
\end{equation}
To see this, let $\xi_0: [0,1] \to U_n^0(A)$ be any path from 1 to $u_x$, the unitary part in the polar decomposition of $x$, and let $\xi_1: [0,1] \to GL_n^0(A)$ be the path from 1 to $|x|$ given by $t \mapsto e^{2\pi i t \log|x|}$. Then $\Delta^n(x)$ is the class of $\tD^n(\xi_0) + \tD^n(\xi_1)$ mod $\tD(\pi_1(GL_n^0(A)))$ (which is contained in $\Theta(\Aff T(A))$. As $\tD^n(\xi_0) \in \Theta(\Aff T(A))$, $\text{Re}(2\pi i \tD^n(\xi_0)) = 0$, leaving $\text{Re}(2\pi i \tD^n(\xi_1)) = 2\pi i \tD^n(\xi_1)$. Moreover, $2\pi i \tD^n(\xi_1)$ is clearly equal to $\Theta(\widehat{\log|x|})$ by \eqref{eq:predetDefn}.

      \subsection{Thomsen's variant} Thomsen's variant of the de la Harpe--Skandalis determinant is the Hausdorffized version, taking into account the closure of the image of the homotopy groups. We consider the map
      \begin{equation} \bar{\Delta}^n: GL_n^0(A) \to \left(A/\ov{[A,A]}\right)/\ov{\tD^n(\pi_1(GL_n^0(A)))}, \end{equation}
      given by $\bar{\Delta}^n(x) := [\tD^n(\xi_x)]$ where $\xi_x: [0,1] \to GL_n^0(A)$ is any piece-wise smooth path from 1 to $x \in GL_n^0(A)$.
This is almost the same map as $\Delta^n$, except the codomain is now the quotient by the closure of the image of the fundamental group under the pre-determinant (i.e., the Hausdorffization of the codomain).
Unlike with $\Delta^n$, \cite[Lemma 3.1]{Thomsen95} gives that the kernel of $\bar\Delta^n$ can be identified, without any regularity assumptions on the C*-algebra. 

      \begin{lemma}\label{lem:pdak-comm-int-unitary}
        Let $A$ be a unital C*-algebra. 
            \begin{enumerate}
              \item $\ker \bar\Delta^n|_{U_n^0(A)} = CU_n^0(A)$;
              \item $\ker \bar\Delta^n = CGL_n^0(A)$.
            \end{enumerate}
        \end{lemma}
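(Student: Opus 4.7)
The containments $CU_n^0(A) \subseteq \ker\bar\Delta^n|_{U_n^0(A)}$ and $CGL_n^0(A) \subseteq \ker\bar\Delta^n$ will follow as soon as we know $\bar\Delta^n$ is continuous, because the codomain is a Hausdorff abelian topological group. Continuity is built into the local log formula of Proposition~\ref{prop:detFacts}(2), which describes $\bar\Delta^n$ near the identity, and multiplicativity (Proposition~\ref{prop:detFacts}(1)) spreads this continuity everywhere. The strategy for the reverse inclusions is to exhibit continuous homomorphisms in the opposite direction, using the exponential map together with the Trotter product formula applied inside the abelian Hausdorff quotient.

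For the hard direction of (1), I would introduce
\begin{equation} \psi : M_n(A)_{sa} \to U_n^0(A)/CU_n^0(A), \qquad \psi(h) := [e^{2\pi i h}]. \end{equation}
Trotter's product formula, combined with commutativity and Hausdorffness of the codomain, makes $\psi$ a continuous additive homomorphism. The central step is to show $\psi$ vanishes on $A_0 \cap M_n(A)_{sa}$. By a BCH expansion, for $a,b \in M_n(A)_{sa}$ one has
\begin{equation} [e^{2\pi i s a}, e^{2\pi i s b}] = \exp\bigl(-4\pi^2 s^2 [a,b] + O(s^3)\bigr), \end{equation}
and this group commutator lies in $DU_n^0(A) \subseteq CU_n^0(A)$; setting $s = 1/\sqrt{N}$ and taking $N$-th powers (using $(e^X)^N = e^{NX}$) produces $\exp(-4\pi^2 [a,b] + O(N^{-1/2})) \in CU_n^0(A)$, and passing to the limit shows $\psi(2\pi i [a,b]) = 0$. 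By Cuntz--Pedersen, elements $i[a,b]$ with $a,b$ self-adjoint span $[A,A] \cap M_n(A)_{sa}$ in real combinations, so continuity gives $\psi|_{A_0} = 0$ and $\psi$ descends to $\bar\psi : \Aff T(A) \to U_n^0(A)/CU_n^0(A)$. Using Remark~\ref{rem:cts-to-piece-wise-smooth}(1), a piece-wise smooth loop in $U_n^0(A)$ is homotopic to one of the form $\prod_j e^{2\pi i a_j} = 1$ with pre-determinant $[\sum_j a_j]$; abelian-Trotter then gives $\psi(\sum_j a_j) = [\prod_j e^{2\pi i a_j}] = 1$, so $\bar\psi$ further factors through $\Aff T(A)/\overline{\tD^n(\pi_1(U_n^0(A)))}$. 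Given $u \in U_n^0(A)$ with $\bar\Delta^n(u) = 0$, writing $u = \prod_j e^{2\pi i h_j}$ and applying abelian-Trotter yields $[u] = \psi(\sum_j h_j) = \bar\psi(\bar\Delta^n(u)) = 0$, so $u \in CU_n^0(A)$.

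For the hard direction of (2), I would exploit polar decomposition $x = u|x|$, with $u \in U_n^0(A)$ because polar decomposition is continuous. Since $\overline{[A,A]}$ is closed under involution, $A/\overline{[A,A]}$ decomposes as $\Aff T(A) \oplus i\Aff T(A)$, and \eqref{eq:RealPart} shows that $\bar\Delta^n(u)$ lies in the real summand while $\bar\Delta^n(|x|) = \tfrac{1}{2\pi i}[\log|x|]$ lies in the imaginary one; because $\overline{\tD^n(\pi_1(GL_n^0(A)))} \subseteq \Theta(\Aff T(A))$ is real, $\bar\Delta^n(x) = \bar\Delta^n(u) + \bar\Delta^n(|x|) = 0$ forces both summands to vanish individually. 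Part (1) gives $u \in CU_n^0(A) \subseteq CGL_n^0(A)$, and the imaginary vanishing gives $\log|x| \in A_0 \cap M_n(A)_{sa}$. It then suffices to show $e^h \in CGL_n^0(A)$ whenever $h \in M_n(A)_{sa}$ has $\widehat{h} = 0$; for this I would define $\phi : M_n(A) \to GL_n^0(A)/CGL_n^0(A)$ by $\phi(z) := [e^z]$, which is a continuous additive homomorphism by Trotter, and observe that for $a$ invertible, $e^{ab} = a e^{ba} a^{-1}$ and hence $[e^{ab}] = [e^{ba}]$ by commutativity of the quotient, so $\phi([a,b]) = 0$; the general case follows by replacing $a$ with $a + \lambda$ for large $\lambda$. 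By continuity $\phi$ kills $\overline{[A,A]} \supseteq A_0$, finishing the proof. The main obstacle is the unitary commutator argument in part (1): since conjugation by a self-adjoint element does not preserve unitarity, the clean identity $e^{ab} = a e^{ba} a^{-1}$ available in the GL case is unavailable, and one must carefully track the $O(s^3)$ BCH error through the $N$-th power asymptotics.
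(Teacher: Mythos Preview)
Your argument is correct and, for part~(1), is essentially Thomsen's proof of \cite[Lemma~3.1]{Thomsen95}, which is exactly what the paper cites: define $h \mapsto [e^{2\pi i h}]$ into the abelian Hausdorff quotient, use Trotter to get additivity, and use a BCH/limit argument to kill self-adjoint commutators (hence $A_0$ by Cuntz--Pedersen). The loop-factoring step via Remark~\ref{rem:cts-to-piece-wise-smooth}(1) is also the standard move.

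For part~(2) your route diverges from the paper's. The paper simply says to rerun Thomsen's argument with exponentials $e^a$ for arbitrary $a \in M_n(A)$ in place of $e^{ia}$ for self-adjoint $a$. You instead split $x = u|x|$, use the real/imaginary decomposition of $A/\overline{[A,A]}$ to separate $\bar\Delta^n(u)$ from $\bar\Delta^n(|x|)$, invoke part~(1) for $u$, and then build $\phi: M_n(A) \to GL_n^0(A)/CGL_n^0(A)$ to handle $|x|$. This works, and your observation that the identity $e^{ab} = a e^{ba} a^{-1}$ (for invertible $a$) kills commutators cleanly --- with no BCH asymptotics needed --- is a genuine simplification over the unitary case. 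But note that this same map $\phi$, once you have it, already proves~(2) directly without the polar-decomposition detour or any appeal to~(1): write $x = \prod_j e^{a_j}$, so $[x] = \phi(\sum_j a_j)$; show $\phi$ descends to $(A/\overline{[A,A]})/\overline{\tD^n(\pi_1(GL_n^0(A)))}$ by the same loop argument; and conclude $[x] = \bar\phi(\bar\Delta^n(x)) = 0$. That is precisely the paper's intended one-line adaptation. Your route has the virtue of foreshadowing the polar-decomposition theme of the rest of the paper, at the cost of a slightly longer proof.
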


        We note that \cite[Lemma 3.1]{Thomsen95} only proves (1) above. However working with exponentials $e^a$ with $a \in A$ instead of $e^{ia}$ for $a \in A_{sa}$ yields (2). 

        Two things follow for free here: the first is that $CGL_n^0(A) \cap U_n^0(A) = CU_n^0(A)$ (the inclusion $\supseteq$ is automatic, while $\subseteq$ follows from (1)).
        The second is that the canonical map $U_n^0(A)/CU_n^0(A) \to GL_n^0(A)/CGL_n^0(A)$ is an injection for $n \in \bN \cup \{\infty\}$. 
        Thomsen also gave the following unnatural direct sum decomposition of $\hku(A)$ in terms of $K$-theory and traces (the sequence in (\ref{eq:thomsen-splitting}) splits as one can show that $\Aff T(A)/\ov{\rho_A(K_0(A))}$ is a divisible abelian group).

        \begin{theorem}[Corollary 3.3, \cite{Thomsen95}]
        Let $A$ be a unital C*-algebra. 
            There is an exact sequence
            \begin{equation}\label{eq:thomsen-splitting}
             0 \to \Aff T(A)/\ov{\rho_A(K_0(A))} \to \hku(A) \to K_1(A) \to 0,
             \end{equation}
            which splits unnaturally. 
          \end{theorem}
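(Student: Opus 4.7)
The plan is to construct the two arrows of the exact sequence from the Thomsen determinant $\bar\Delta$ and an obvious quotient map, then derive the splitting from divisibility.

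First I would set up the surjection onto $K_1(A)$. Since $K_1(A) = U_\infty(A)/U_\infty^0(A)$ is abelian, we have $DU_\infty(A) \subseteq U_\infty^0(A)$; because $U_\infty^0(A)$ is closed, this gives $CU_\infty(A) \subseteq U_\infty^0(A)$, so the canonical quotient descends to a surjection $q: \hku(A) \onto U_\infty(A)/U_\infty^0(A) = K_1(A)$ whose kernel is $U_\infty^0(A)/CU_\infty(A)$.

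Next I would identify this kernel with $\Aff T(A)/\ov{\rho_A(K_0(A))}$ using $\bar\Delta$. Lemma \ref{lem:pdak-comm-int-unitary}(1) says $\ker \bar\Delta|_{U_\infty^0(A)} = CU_\infty(A)$, so $\bar\Delta$ descends to an injection of $U_\infty^0(A)/CU_\infty(A)$ into $(A/\ov{[A,A]})/\ov{\tD(\pi_1(U_\infty^0(A)))}$. The corollary preceding this statement ensures that the image lies in the image of $\Theta(\Aff T(A))$. Using the canonical isomorphism $\pi_1(GL_\infty^0(A)) \simeq K_0(A)$ sending $[\xi_p] \mapsto [p]$ together with property (\ref{predetprojection}) of Proposition \ref{prop:detFacts}, one computes that $\tD(\pi_1(GL_\infty^0(A)))$ corresponds to $\rho_A(K_0(A)) \subseteq \Aff T(A)$, and the lemma earlier in this section yields $\tD(\pi_1(U_\infty^0(A))) = \tD(\pi_1(GL_\infty^0(A)))$. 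For surjectivity, given $a \in A_{sa}$ I would lift $\hat a$ by the unitary $e^{2\pi i a}$: the path $\xi(t) := e^{2\pi i t a}$ satisfies $\xi'(t)\xi(t)^{-1} = 2\pi i a$, so $\tD(\xi) = \Tr(a) = \Theta(\hat a)$. These steps together identify the kernel of $q$ with $\Aff T(A)/\ov{\rho_A(K_0(A))}$, establishing exactness.

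Finally, for the splitting, since $\Aff T(A)$ is an $\bR$-vector space it is divisible as an abelian group, and so is any quotient; in particular $\Aff T(A)/\ov{\rho_A(K_0(A))}$ is divisible. Divisible abelian groups are injective $\bZ$-modules, so the identity map on the kernel extends to a retraction $\hku(A) \to \Aff T(A)/\ov{\rho_A(K_0(A))}$, which is equivalent to a splitting. I expect the main obstacle to be the careful identification of $\tD(\pi_1(GL_\infty^0(A)))$ with $\rho_A(K_0(A))$: one must verify both that the loops $\xi_p$ generate $\pi_1(GL_\infty^0(A))$ under the Bott-type identification and that $\tD(\xi_p) = \Tr(p)$ correctly corresponds to $\rho_A([p])$ via $A_{sa}/A_0 \simeq \Aff T(A)$.
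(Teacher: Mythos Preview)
Your proposal is correct and follows essentially the same approach as the paper. The paper does not give its own proof of this statement but cites it from Thomsen's original article; the only argument it supplies is the parenthetical remark that the splitting holds because $\Aff T(A)/\ov{\rho_A(K_0(A))}$ is a divisible abelian group, which is exactly your final step, and the identification of $\ker q$ with $\Aff T(A)/\ov{\rho_A(K_0(A))}$ via $\bar\Delta$ is precisely Thomsen's Theorem 3.2 (referenced later in Section \ref{section:pdak-nonstable-algebraic-k-theory}).
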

          Indeed, the splitting above is necessarily unnatural as can be seen in \cite[Section 5]{NielsenThomsen96}. These give examples of morphisms which agree on $K$-theory and traces but disagree on $U(A)/CU(A)$.\footnote{$U(A)/CU(A)$ is isomorphic to $\hku(A)$ as a topological group in this case since the C*-algebras in question satisfy certain $K$-theoretic regularity conditions - see Remark \ref{R:nonstablehausdorffizedgroup}.}

   \section{Polar decomposition}\label{section:polar-decomposition}

We produce direct sum decompositions of the algebraic $K_1$-group of a C*-algebra in terms of the unitary algebraic $K_1$-group and traces. We provide Hausdorffized versions as well. We motivate this with the example of the complex numbers.
  \begin{example}
    Let $A = \bC$. Then we have isomorphisms
\begin{equation} \begin{split}
\ka(A) &\simeq \bC^\times, \quad\text{and} \\
\ku(A) &\simeq \mathbb T, 
\end{split} \end{equation}
via the (usual) determinant map, and
\begin{equation} \Aff T(A) \simeq \bR, \end{equation}
since $A$ has a unique trace.
Hence we see that $\ka(A) \simeq \bT \oplus \Aff T(A)$.
The projection $\ka(A) \to \Aff T(A)$ is given by the canonical map $\log(|\cdot|):\bC^\times \to \bR$, so it is the map
\begin{equation} [x] \mapsto \log(|\det(x)|) = \log(\det(|x|)) = \tr(\log(|x|)),\end{equation}
where $\tr$ is the unnormalized trace on $M_{\infty}(A)$ (which agrees with $tr_n$ if $x \in M_n(A)$). 
  \end{example}

  \subsection{Non-Hausdorffized algebraic $K$-theory}\label{nonhausdorff}
  We start by examining the structure of $U_n^0(A)/\ker \Delta^n|_{U_n^0(A)}$ and $GL_n^0(A)/\ker \Delta^n$. We'll then apply these results to C*-algebras satisfying
  \begin{equation} DU_{\infty}^0(A) = \ker \Delta|_{U_{\infty}^0(A)} \text{ and } DGL_{\infty}^0(A) = \ker \Delta. \end{equation}
First we show that $\Delta^n$ is invariant under conjugation by elements of $GL_n(A)$.
The following is not obvious from the fact that $\Delta^n$ is a homomorphism since $\Delta^n(s)$ is not defined when $s\not\in GL_n^0(A)$.

    \begin{lemma}
        Let $A$ be a unital C*-algebra. 
      For $x \in GL_n^0(A), \Delta^n(s^{-1}xs) = \Delta^n(x)$ for any $s \in GL_n(A)$. 
    \end{lemma}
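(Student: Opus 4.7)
The plan is to reduce the statement to the trace identity $\Tr(s^{-1}as) = \Tr(a)$ in $A/\overline{[A,A]}$ via a direct computation at the level of the pre-determinant. The reason we cannot simply invoke the homomorphism property of $\Delta^n$ is exactly the one noted in the hint: the element $s$ need not lie in $GL_n^0(A)$, so $\Delta^n(s)$ is a priori undefined.

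First I would choose a piece-wise smooth path $\xi : [0,1] \to GL_n^0(A)$ with $\xi(0) = 1$ and $\xi(1) = x$, and consider the conjugated path $\eta(t) := s^{-1}\xi(t)s$. Since $\eta$ is continuous with $\eta(0) = 1$, its image lies in the connected component of the identity, so $\eta$ is a piece-wise smooth path in $GL_n^0(A)$ from $1$ to $s^{-1}xs$. Thus $\Delta^n(s^{-1}xs) = [\tilde\Delta^n(\eta)]$ and $\Delta^n(x) = [\tilde\Delta^n(\xi)]$, and it suffices to show $\tilde\Delta^n(\eta) = \tilde\Delta^n(\xi)$.

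Next I would compute $\eta'(t) = s^{-1}\xi'(t)s$ and $\eta(t)^{-1} = s^{-1}\xi(t)^{-1}s$, so that
\begin{equation}
\eta'(t)\eta(t)^{-1} = s^{-1}\xi'(t)\xi(t)^{-1}s.
\end{equation}
Substituting into the definition \eqref{eq:predetDefn} of $\tilde\Delta^n$ and using that $\Tr$ is the universal trace (so $\Tr(ab) = \Tr(ba)$ for all $a,b$, and in particular $\Tr(s^{-1}\,\cdot\, s) = \Tr(\cdot)$ on $A/\overline{[A,A]}$), the integrand becomes $\Tr(\xi'(t)\xi(t)^{-1})$. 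Integrating gives $\tilde\Delta^n(\eta) = \tilde\Delta^n(\xi)$, which completes the argument.

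The only subtlety is the one addressed in the first paragraph, namely justifying that conjugation by a possibly disconnected $s$ still produces a path inside $GL_n^0(A)$; this is immediate once we note the path starts at $1$. Everything else is a one-line application of the trace property of $\Tr$ on $A/\overline{[A,A]}$.
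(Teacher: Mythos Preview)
Your proof is correct and follows essentially the same approach as the paper: conjugate a piece-wise smooth path from $1$ to $x$ by $s$, compute $\eta'(t)\eta(t)^{-1}=s^{-1}\xi'(t)\xi(t)^{-1}s$, and apply the trace property of $\Tr$ to conclude $\tD^n(\eta)=\tD^n(\xi)$. The paper's version is just slightly terser, leaving the computation of $\eta(t)^{-1}$ and the explicit invocation of $\Tr(s^{-1}\,\cdot\,s)=\Tr(\cdot)$ to the reader.
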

        \begin{proof}
          If $\xi: [0,1] \to GL_n^0(A)$ is a piece-wise smooth path from 1 to $x$, then $\eta := s^{-1}\xi(\cdot)s: [0,1] \to GL_n^0(A)$ is a piece-wise smooth path from 1 to $s^{-1}xs$ with $\eta'(t) = s^{-1}\xi'(t)s$ (whenever we can differentiate). Consequently (using \eqref{eq:predetDefn}), 
          \begin{equation} \tD^n(\eta) = \tD^n(\xi), \end{equation}
and the result follows.
        \end{proof}

        \begin{lemma}
        Let $A$ be a unital C*-algebra. 
          For $n \in \bN \cup \{\infty\}, x,y \in GL_n(A)$,
          \begin{equation}
\label{eq:DeltaAbsMultplicative}
 \Delta^n(|xy|) = \Delta^n(|x|) + \Delta^n(|y|). \end{equation}
        \end{lemma}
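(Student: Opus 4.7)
The plan is to compute the pre-determinant $\tilde\Delta^n$ of a well-chosen path from $|y|^2$ to $|xy|^2$ in two ways, and then descend to $\Delta^n$ via equation (\ref{eq:RealPart}).

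First I would consider the smooth path $\gamma: [0,1] \to GL_n^0(A)$ defined by $\gamma(t) := y^*|x|^{2t}y$, which takes values in the positive invertibles since $\gamma(t) = (|x|^t y)^*(|x|^t y)$, with $\gamma(0) = |y|^2$ and $\gamma(1) = |xy|^2$. Since $\log|x|$ commutes with $|x|^{2t}$, one computes $\gamma'(t) = 2y^*(\log|x|)|x|^{2t}y$ and $\gamma(t)^{-1} = y^{-1}|x|^{-2t}(y^*)^{-1}$; the middle powers of $|x|$ cancel to give $\gamma'(t)\gamma(t)^{-1} = 2y^*(\log|x|)(y^*)^{-1}$. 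Applying $\Tr$ and using the tracial property then yields $\Tr(\gamma'(t)\gamma(t)^{-1}) = 2[\log|x|]$, so $\tilde\Delta^n(\gamma) = \frac{1}{\pi i}[\log|x|]$ in $A/\overline{[A,A]}$.

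Next I would establish a ``positive-path formula'': for any smooth path $\sigma(t) = e^{h(t)}$ of positive invertibles with $h$ self-adjoint, $\tilde\Delta^n(\sigma) = \frac{1}{2\pi i}([\log\sigma(1)] - [\log\sigma(0)])$. Using Duhamel's formula, $\sigma'(t)\sigma(t)^{-1} = \int_0^1 e^{sh(t)} h'(t) e^{-sh(t)}\,ds$, and the tracial property of $\Tr$ collapses each integrand to $\Tr(h'(t)) = \frac{d}{dt}[\log\sigma(t)]$, so the formula follows by integration. Applied to $\gamma$ (using $\log a^2 = 2\log a$ for positive invertible $a$), this yields $\tilde\Delta^n(\gamma) = \frac{1}{\pi i}([\log|xy|] - [\log|y|])$.

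Equating the two expressions gives $[\log|xy|] = [\log|x|] + [\log|y|]$ in the complex vector space $A/\overline{[A,A]}$, so also $\frac{1}{2\pi i}[\log|xy|] = \frac{1}{2\pi i}[\log|x|] + \frac{1}{2\pi i}[\log|y|]$. Projecting this equation to the quotient $(A/\overline{[A,A]})/\tilde\Delta^n(\pi_1(GL_n^0(A)))$, equation (\ref{eq:RealPart}) identifies each $\frac{1}{2\pi i}[\log|z|]$ with $\Delta^n(|z|)$, yielding $\Delta^n(|xy|) = \Delta^n(|x|) + \Delta^n(|y|)$. The main obstacle is the positive-path formula in the second step: no comparably clean identity holds for general paths in $GL_n^0(A)$, and it is the commutativity of $\log p$ with $p^t$ for positive $p$, together with the cyclic property of $\Tr$, that makes the Duhamel integrand collapse. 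A secondary subtlety is that the analogue argument using only conjugation invariance yields the identity only at the level of squares ($2\Delta^n(|xy|) = 2\Delta^n(|x|) + 2\Delta^n(|y|)$), and dividing by $2$ is not in general legitimate in the codomain of $\Delta^n$; lifting to the vector space $A/\overline{[A,A]}$ via the path computation is what circumvents this.
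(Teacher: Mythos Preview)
Your proof is correct and follows a genuinely different route from the paper's. The paper argues via polar decomposition: writing $xy = u|xy|$, $x = u_x|x|$, $y = u_y|y|$, it expresses $|xy| = u^*u_x|x|u_y|y|$ as a product of elements of $GL_n^0(A)$, invokes the conjugation-invariance lemma immediately preceding this one to handle the conjugated factor $u_y^{-1}|x|u_y$, and then uses the real-part identity \eqref{eq:RealPart} to annihilate the unitary contribution $\Delta^n(u^*u_xu_y)$. Your argument bypasses both polar decomposition and that conjugation-invariance lemma entirely: by computing the pre-determinant of the explicit positive path $t \mapsto y^*|x|^{2t}y$ in two ways (once directly, once via the Duhamel-based positive-path formula), you obtain the identity $[\log|xy|] = [\log|x|] + [\log|y|]$ already in the vector space $A/\overline{[A,A]}$, and the lemma follows by projecting. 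The paper's route meshes with the surrounding polar-decomposition narrative and reuses earlier machinery; yours is self-contained, makes the underlying trace identity (the C*-analogue of $\log\det(xy) = \log\det x + \log\det y$) explicit, and cleanly sidesteps the torsion concern you flag by working upstairs before passing to the quotient.
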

            \begin{proof}
Let $xy=u|xy|$, $x=u_x|x|$, and $y=u_y|y|$ be polar decompositions.
Then $|u^*xy|=|xy|=u^*xy$, and in $GL_n(A)/DGL_n(A)$, we have
\begin{equation} [|xy|]=[u^*xy] = [u^*u_x|x|u_y|y|]=[u^*u_xu_y]+[|x||y|]. \end{equation}
Hence by the previous lemma and using the fact \eqref{eq:RealPart} that $i\Delta^n(|z|)=\text{Re}(i \Delta^n(z))$ for $z \in GL_n^0(A)$,
                \begin{equation}\begin{split}
i\Delta^n(|xy|)&=\text{Re}(i\Delta^n(|xy|)) \\
&=\text{Re}\big(i\Delta^n(u^*u_xu_y)+\Delta^n(|x||y|)\big) \\
&= 0+i\Delta^n(|x|)+i\Delta^n(|y|),
\end{split}\end{equation}
as desired.
            \end{proof}

          \begin{lemma}
        Let $A$ be a unital C*-algebra. 
            The map
          \begin{equation} \chi_n: GL_n(A)/\ker\Delta^n \to \Aff T(A) \end{equation}
          defined by $[x] \mapsto \widehat{\log|x|}$ is a well-defined surjective group homomorphism.
          \end{lemma}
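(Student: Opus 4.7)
The plan is to verify the three assertions—well-definedness, the homomorphism property, and surjectivity—by first working with the auxiliary map $\varphi\colon GL_n(A) \to \Aff T(A)$ given by $\varphi(x) := \widehat{\log|x|}$. This is unambiguously defined on all of $GL_n(A)$ (not just the quotient), because $|x|$ is always a positive invertible and so $\log|x| \in M_n(A)_{sa}$ is a genuine self-adjoint element. I will then show $\varphi$ is a group homomorphism whose kernel contains $\ker\Delta^n$, so that $\varphi$ descends to the desired $\chi_n$; surjectivity will follow from an explicit lifting.

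The homomorphism property and the kernel containment both rest on the same real/imaginary argument assembled from equations \eqref{eq:RealPart} and \eqref{eq:DeltaAbsMultplicative}. The key ingredients are: (i) the exact identity $2\pi i\,\tD^n(\xi_{|z|}) = [\log|z|] = \Theta(\widehat{\log|z|})$ in $A/\ov{[A,A]}$ (not merely modulo the fundamental group image) for the path $\xi_{|z|}(t) := |z|^t$, by direct computation from \eqref{eq:predetDefn}; and (ii) the previously-established inclusion $\tD^n(\pi_1(GL_n^0(A))) \subseteq \Theta(\Aff T(A))$, combined with the real decomposition $A/\ov{[A,A]} = \Theta(\Aff T(A)) \oplus i\,\Theta(\Aff T(A))$ induced by the canonical involution, so that $2\pi i\,\tD^n(\pi_1(GL_n^0(A)))$ lies in the purely imaginary part $i\,\Theta(\Aff T(A))$. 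For the homomorphism property, \eqref{eq:DeltaAbsMultplicative} forces the real element $[\log|xy|] - [\log|x|] - [\log|y|]$ to lie in $i\,\Theta(\Aff T(A))$, hence to vanish, giving $\widehat{\log|xy|} = \widehat{\log|x|} + \widehat{\log|y|}$ in $\Aff T(A)$. For $z \in \ker\Delta^n$, \eqref{eq:RealPart} gives $[\log|z|] = \mathrm{Re}(2\pi i\,\tD^n(\xi_z))$; since $\Delta^n(z) = 0$ forces $\tD^n(\xi_z) \in \Theta(\Aff T(A))$, its $2\pi i$-multiple is purely imaginary and has zero real part, so $\widehat{\log|z|} = 0$. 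Therefore $\ker\Delta^n \subseteq \ker\varphi$ and $\varphi$ descends to $\chi_n$.

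Surjectivity is immediate: for $f \in \Aff T(A)$, the isometric identification $A_{sa}/A_0 \simeq \Aff T(A)$ recalled in the preliminaries yields $a \in A_{sa}$ with $\widehat{a} = f$, and then $x := e^a \in GL(A) \subseteq GL_n(A)$ is positive invertible with $|x| = e^a$, so $\chi_n([x]) = \widehat{a} = f$. The only delicate point in the whole argument is the real/imaginary bookkeeping on $A/\ov{[A,A]}$, which is what upgrades equalities modulo $\tD^n(\pi_1(GL_n^0(A)))$ to exact equalities in $\Aff T(A)$; once this is set up, the lemma follows mechanically.
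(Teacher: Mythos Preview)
Your proof is correct and follows essentially the same approach as the paper: both rely on \eqref{eq:RealPart} and \eqref{eq:DeltaAbsMultplicative}, using the real/imaginary decomposition of $A/\ov{[A,A]}$ to pass from equalities modulo $\tD^n(\pi_1(GL_n^0(A)))$ to exact equalities in $\Aff T(A)$. The only organisational difference is that you separate the homomorphism property of $\varphi$ from the kernel containment $\ker\Delta^n \subseteq \ker\varphi$, while the paper folds both into a single chain of equalities for well-definedness and then asserts additivity follows from \eqref{eq:DeltaAbsMultplicative}; your version makes the real/imaginary bookkeeping (and the direct sum $A/\ov{[A,A]} = \Theta(\Aff T(A)) \oplus i\,\Theta(\Aff T(A))$) more explicit, which is arguably cleaner.
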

              \begin{proof}
                With our identification $A_{sa}/A_0 = \Aff T(A)$, it is enough to show that
                \begin{equation}
                GL_n(A)/\ker \Delta^n \to A_{sa}/A_0:x  \mapsto [\log|x|]
                \end{equation}
                is well-defined. Let $x,y \in GL_n(A)$ with $x = yz$ for some $z \in \ker \Delta^n$. Now we have
\begin{equation}                  \begin{array}{rcl}
                    [\log|x|] &\stackrel{\eqref{eq:RealPart}}=& 2\pi i \Delta^n(|x|) \\
                                 &=& 2\pi i \Delta^n(|yz|) \\
                                 &\stackrel{\eqref{eq:DeltaAbsMultplicative}}=& 2\pi i \Delta^n(|y|) + 2\pi i \Delta^n(|z|) \\
                                 &\stackrel{\eqref{eq:RealPart}}=& [\log|y|] + \text{Re}(2\pi i \Delta^n(z)) \\
                                 &=& [\log|y|].
\end{array}\end{equation}
                The fact that $\chi_n$ is additive also follows from the previous lemma. Finally, $\chi_n$ is surjective since, for $a \in A_{sa}$, 
\begin{equation} \chi_n([e^a])=[a]. \end{equation}
              \end{proof}

      We'll write $\chi_n^0$ for $\chi_n|_{GL_n^0(A)/\ker \Delta^n}$.

      \begin{lemma}
        Let $A$ be a unital C*-algebra. 
        The map
        \begin{equation} s_n: \Aff T(A) \to GL_n^0(A)/\ker \Delta^n \end{equation}
        given by $\hat{a} \mapsto [e^a]$ is a well-defined group homomorphism.
          \begin{proof}
Again, we'll identify $\Aff T(A) = A_{sa}/A_0$. If $[a] = [b]$ in $A_{sa}/A_0$, then
            \begin{equation} \Delta^n(e^a) = \frac{1}{2\pi i}\Tr(a) = \frac{1}{2\pi i}\Tr(b) = \Delta^n(e^b), \end{equation}
            giving well-definedness. Moreover, for $[a],[b]\in A_{sa}/A_0$,
              \begin{equation} \begin{split}
                \Delta^n(e^{a+b}) &= \frac{1}{2\pi i}\Tr(a+b) \\
                                &= \frac{1}{2\pi i}\Tr(a) + \frac{1}{2\pi i}\Tr(b) \\
                                &= \Delta^n(e^a) + \Delta^n(e^b). 
              \end{split} \end{equation}
Hence, $[a^{a+b}]=[e^ae^b]$ in $GL_n^0/\ker\Delta^n$.
          \end{proof}
      \end{lemma}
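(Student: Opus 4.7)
The plan is to use the identification $\Aff T(A) = A_{sa}/A_0$ from the preliminaries and, for each $a \in A_{sa}$, compute $\Delta^n(e^a)$ directly using the obvious exponential path. Concretely, take the piece-wise smooth path $\xi(t) := e^{ta}$ from $1$ to $e^a$ in $GL_n^0(A)$. A quick differentiation shows $\xi'(t)\xi(t)^{-1} = a$ for all $t \in [0,1]$, so from the definition \eqref{eq:predetDefn} of $\tD^n$ we get
\begin{equation}
\tD^n(\xi) = \frac{1}{2\pi i} \int_0^1 \Tr(a)\,dt = \frac{1}{2\pi i}\Tr(a),
\end{equation}
and therefore $\Delta^n(e^a) = \bigl[\tfrac{1}{2\pi i}\Tr(a)\bigr]$ in the codomain of $\Delta^n$.

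With this explicit formula in hand, the lemma becomes routine. First, to check that $s_n$ is well-defined, suppose $\hat a = \hat b$ in $\Aff T(A)$, i.e., $a - b \in A_0$. Since $A_0 \subseteq \ov{[A,A]}$, the class $\Tr(a-b)$ vanishes in $A/\ov{[A,A]}$, so $\Delta^n(e^a) = \Delta^n(e^b)$; equivalently, $[e^a] = [e^b]$ in $GL_n^0(A)/\ker\Delta^n$. Second, for additivity, note that $\Delta^n$ is already a group homomorphism on $GL_n^0(A)$, so $\Delta^n(e^a e^b) = \Delta^n(e^a) + \Delta^n(e^b) = \bigl[\tfrac{1}{2\pi i}\Tr(a+b)\bigr] = \Delta^n(e^{a+b})$, which gives $[e^a e^b] = [e^{a+b}]$ in $GL_n^0(A)/\ker\Delta^n$, as needed.

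There is really no substantive obstacle here: the content is essentially the computation of $\Delta^n$ on a one-parameter subgroup. The only mild subtlety is remembering that $\Tr$ need not be additive in $a$ at the level of $A_{sa}$ but is additive modulo $\ov{[A,A]}$, which is exactly the codomain of $\tD^n$, so additivity passes through correctly. Everything else is bookkeeping on the identifications $\Aff T(A) \simeq A_{sa}/A_0 \hookrightarrow A/\ov{[A,A]}$.
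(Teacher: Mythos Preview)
Your proof is correct and takes essentially the same approach as the paper's: compute $\Delta^n(e^a) = \tfrac{1}{2\pi i}\Tr(a)$ via the exponential path and use this formula for both well-definedness and additivity. (Your closing aside is slightly off---$\Tr$ is the linear quotient map $A \to A/\ov{[A,A]}$ and is therefore additive on $A_{sa}$ on the nose---but this does not affect the argument.)
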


        \begin{remark}
          Now we explain why we are working with $\Tr$ instead of working with each tracial state concurrently. If we worked with $\Delta_\tau$, where $\tau$ ranges over $\tau \in T(A)$, the same arguments above will hold. However, unless one makes a separability assumption (more specifically, that $K_0(A)$ is countable), we don't necessarily have $\ker\Delta = \bigcap_{\tau \in T(A)} \ker\Delta_\tau$. Indeed, if we had a piece-wise smooth path $\xi_\tau$ from 1 to $x$ with $\tD_\tau(\xi_\tau) \in \tau(K_0(A))$ for all $\tau \in T(A)$, it is not necessarily true that we can find a single element of $x \in K_0(A)$ such that $\tD_\tau(\xi_\tau) = \rho_A(x)(\tau)$ for all $\tau \in T(A)$. See Lemme 5 and Proposition 6 of \cite{dlHS84a}. 
        \end{remark}

        \begin{prop}
        Let $A$ be a unital C*-algebra. 
          The sequence
          \begin{equation} \begin{tikzcd}
0 \arrow[r] & U_n^0(A)/ker \Delta^n|_{U_n^0(A)} \arrow[r, "\iota_n^0"] & GL_n^0(A)/\ker\Delta^n \arrow[r, "\chi_n^0"] & \Aff T(A) \arrow[r] \arrow[l, "s_n"', bend right=69] & 0
\end{tikzcd} \end{equation}
is a split short exact sequence.
        \end{prop}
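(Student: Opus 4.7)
The plan is to verify the four defining conditions of a split short exact sequence: injectivity of $\iota_n^0$, the vanishing of the composition $\chi_n^0 \iota_n^0$, the reverse containment $\ker\chi_n^0 \subseteq \mathrm{im}\,\iota_n^0$, and the right-inverse identity $\chi_n^0 s_n = \mathrm{id}$. Three of these are essentially immediate from work already done; the real content is exactness at the middle, which will be driven by polar decomposition together with the formula \eqref{eq:RealPart}.

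Injectivity of $\iota_n^0$ is tautological: the notation $\ker \Delta^n|_{U_n^0(A)}$ means exactly $U_n^0(A) \cap \ker \Delta^n$, so the induced map on cosets is injective. For the composition: if $u \in U_n^0(A)$ then $|u|=1$, hence $\log|u|=0$ and $\chi_n^0([u]) = \widehat{\log|u|}=0$. Surjectivity of $\chi_n^0$ was established in the preceding lemma via $\chi_n^0([e^a]) = \hat a$, and the same calculation shows $\chi_n^0 s_n = \mathrm{id}$.

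The heart of the argument is showing $\ker\chi_n^0 \subseteq \mathrm{im}\,\iota_n^0$. Given $x \in GL_n^0(A)$ with $\chi_n^0([x])=0$, take the polar decomposition $x = u_x |x|$, where $u_x = x|x|^{-1}$. Continuity of $x \mapsto u_x$ on $GL_n(A)$ ensures $u_x \in U_n^0(A)$ (for $n=\infty$, one first truncates a path from $1$ to $x$ to some finite level using compactness of $[0,1]$ and the inductive limit topology, then applies polar decomposition at that finite level). The hypothesis $\widehat{\log|x|}=0$ in $\Aff T(A)$ forces $\log|x| \in A_0$, and the inclusion $A_0 \subseteq \overline{[A,A]}$ recorded earlier then gives $[\log|x|]=0$ in $A/\overline{[A,A]}$. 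By \eqref{eq:RealPart} this yields $2\pi i\Delta^n(|x|) = [\log|x|]=0$, so $|x| \in \ker \Delta^n$. Therefore in $GL_n^0(A)/\ker\Delta^n$ one has $[x] = [u_x][|x|] = [u_x] = \iota_n^0([u_x])$, as required.

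The only mild obstacle is confirming that the unitary part $u_x$ lies in the connected component $U_n^0(A)$, which is a continuity observation supplemented by a compactness argument when $n = \infty$. Everything else is bookkeeping with the identification $\Aff T(A) \simeq A_{sa}/A_0$ and the inclusion $A_0 \subseteq \overline{[A,A]}$ already set up in the section.
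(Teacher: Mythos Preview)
Your proof is correct and follows essentially the same route as the paper: both verify injectivity and the splitting identity as immediate, and for exactness in the middle use polar decomposition together with \eqref{eq:RealPart} to see that $\Delta^n(|x|)=0$ when $\widehat{\log|x|}=0$, whence $[x]=[u_x]$. You add the (helpful) justification that $u_x\in U_n^0(A)$, which the paper leaves implicit; one small notational quibble is that for $n>1$ the element $\log|x|$ lives in $M_n(A)_{sa}$ rather than $A_{sa}$, so ``$\log|x|\in A_0$'' should be read as the vanishing of $\Tr(\log|x|)$ (equivalently $\tr\log|x|\in A_0$), but this does not affect the argument.
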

  \begin{proof}
We know that $\iota_n^0$ is injective and that $\chi_n^0\circ s_n = \mathrm{id}_{\Aff(T(A))}$ (and hence $\chi_n^0$ is surjective). We must show that
    \begin{equation} \ker \chi_n^0 = \mathrm{Im}(\iota_n^0). \end{equation}
    The containment $\supseteq$ is trivial since a unitary has positive part equal to 1. For the reverse containment, suppose that  $x \in GL_n^0(A)$ is such that $\Tr(\log|x|) = 0$. Then, letting $x=u_x|x|$ be the polar decomposition,
      \begin{equation} \begin{split}
      \Delta^n(x) &= \Delta^n(u_x) + \Delta^n(|x|) \\
                  &= \Delta^n(u_x) + \frac{1}{2\pi i}\Tr(\log |x|) \\
                  &= \Delta^n(u_x) \in \iota_n^0\Big(U_n^0(A)/\ker \Delta^n|_{U_n^0(A)}\Big).
      \end{split} \end{equation}
  \end{proof}

        We wish to remove the superscript 0 to get a sequence involving
        $ U_n(A)/\ker\Delta^n|_{U_n^0(A)}$, and $GL_n(A)/\ker\Delta^n, A_{sa}/A_0$

        \begin{theorem}\label{glnsplitting}
        Let $A$ be a unital C*-algebra. 
        The sequence 
          \begin{equation} \begin{tikzcd}
0 \arrow[r] & U_n(A)/ker \Delta^n|_{U_n^0(A)} \arrow[r, "\iota_n"] & GL_n(A)/\ker\Delta^n \arrow[r, "\chi_n"] & \Aff T(A) \arrow[r] \arrow[l, "s_n"', bend right=69] & 0
\end{tikzcd} \end{equation}
is a split short exact sequence.
\end{theorem}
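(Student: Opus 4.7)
The plan is to upgrade the short exact sequence proven at the connected-component level in the previous proposition to the level of the full groups $U_n(A)$ and $GL_n(A)$, by matching up the connected components of $GL_n(A)$ and $U_n(A)$ via polar decomposition. Since $\chi_n$ is already defined on all of $GL_n(A)/\ker\Delta^n$ and $s_n$ takes values in $GL_n^0(A)/\ker\Delta^n \subseteq GL_n(A)/\ker\Delta^n$, the splitting identity $\chi_n \circ s_n = \id$ and surjectivity of $\chi_n$ transfer immediately from the $GL_n^0$ version. The content of the theorem therefore lies in defining $\iota_n$ correctly, and in verifying its injectivity together with exactness at the middle.

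First, $\iota_n$ should be induced by the inclusion $U_n(A) \hookrightarrow GL_n(A)$, which descends to a well-defined map on the quotients because $\ker\Delta^n|_{U_n^0(A)} = \ker\Delta^n \cap U_n^0(A) \subseteq \ker\Delta^n$. For injectivity, if $u \in U_n(A)$ satisfies $\iota_n([u]) = 0$, then $u \in \ker\Delta^n$, and since $\ker\Delta^n \subseteq GL_n^0(A)$ by definition, we have $u \in U_n(A) \cap GL_n^0(A)$. Polar decomposition, which exhibits $U_n(A)$ as a strong deformation retract of $GL_n(A)$ sending $GL_n^0(A)$ onto $U_n^0(A)$, forces this intersection to equal $U_n^0(A)$. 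Hence $u \in U_n^0(A) \cap \ker\Delta^n = \ker\Delta^n|_{U_n^0(A)}$, as needed.

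For exactness at the middle, the inclusion $\mathrm{Im}(\iota_n) \subseteq \ker\chi_n$ is immediate since $|u| = 1$ for $u \in U_n(A)$. The reverse inclusion is the main step, and proceeds via polar decomposition: given $x \in GL_n(A)$ with $\chi_n([x]) = 0$, that is, $\log|x| \in A_0$, the formula \eqref{eq:RealPart} combined with the inclusion $A_0 \subseteq \ov{[A,A]}$ yields $\Delta^n(|x|) = 0$, so $|x| \in \ker\Delta^n$. The polar decomposition $x = u_x|x|$ then gives $[x] = [u_x]$ in $GL_n(A)/\ker\Delta^n$, placing $[x]$ in $\mathrm{Im}(\iota_n)$. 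The main subtlety will be keeping straight the various quotients in the codomain of $\Delta^n$, but since $\log|x|$ vanishes already in $A/\ov{[A,A]}$ prior to passing to the further quotient by $\tD^n(\pi_1(GL_n^0(A)))$, this part of the argument is a clean consequence of \eqref{eq:RealPart}.
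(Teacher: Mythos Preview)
Your argument is correct and in fact more direct than the paper's. The paper obtains the full-group sequence from the connected-component sequence by embedding both into a $3\times 3$ commutative diagram whose columns are the extensions
\[
0 \to U_n^0(A)/\ker\Delta^n|_{U_n^0(A)} \to U_n(A)/\ker\Delta^n|_{U_n^0(A)} \to \pi_0(U_n(A)) \to 0
\]
(and similarly for $GL_n$), using the isomorphism $\pi_0(U_n(A)) \simeq \pi_0(GL_n(A))$ from polar decomposition, and then invoking a standard $3\times 3$ lemma to deduce exactness of the middle row. Your route bypasses the diagram entirely: you verify injectivity of $\iota_n$ by hand via $U_n(A)\cap GL_n^0(A)=U_n^0(A)$, and you establish $\ker\chi_n\subseteq\mathrm{Im}(\iota_n)$ by observing that $\chi_n([x])=0$ forces $\Tr(\log|x|)=0$ in $A/\ov{[A,A]}$, hence $|x|\in\ker\Delta^n$ and $[x]=[u_x]$. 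This is arguably cleaner, and essentially absorbs the connected-component argument rather than invoking it as a black box. One small notational point: when you write ``$\log|x|\in A_0$'' you should really say that $\Tr(\log|x|)$ (the image under the extended universal trace) vanishes in $A/\ov{[A,A]}$, since $\log|x|$ lives in $M_n(A)_{sa}$ rather than $A_{sa}$; your closing sentence already states this correctly, so just harmonize the earlier phrasing.
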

  \begin{proof}
    We have the following commutative diagram:
    \begin{equation} \begin{tikzcd}
            & 0 \arrow[d]                                                         & 0 \arrow[d]                                             & 0 \arrow[d]                                         &   \\
0 \arrow[r] & U_n^0(A)/\ker \Delta^n|_{U_n^0(A)} \arrow[d] \arrow[r, "\iota_n^0"] & GL_n^0(A)/\ker \Delta^n \arrow[d] \arrow[r, "\chi_n^0"] & \Aff T(A) \arrow[d, equal] \arrow[r] & 0 \\
0 \arrow[r] & U_n(A)/\ker \Delta^n|_{U_n^0(A)} \arrow[d] \arrow[r, "\iota_n"]      & GL_n(A)/\ker \Delta^n \arrow[d] \arrow[r, "\chi_n"]     & \Aff T(A) \arrow[d] \arrow[r]                      & 0 \\
0 \arrow[r] & \pi_0(U_n(A)) \arrow[d] \arrow[r, "\simeq"]                         & \pi_0(GL_n(A)) \arrow[d] \arrow[r]                      & 0 \arrow[d] \arrow[r]                               & 0 \\
            & 0                                                                   & 0                                                       & 0                                                   &  
\end{tikzcd} \end{equation}
where all the columns, as well as the 1$^\text{st}$ and 3$^\text{rd}$ rows are exact. As we have
\begin{equation} \iota_n(U_n(A)/\ker\Delta^n|_{U_n^0(A)}) \subseteq \ker \chi_n, \end{equation}
  it follows from \cite[Exercise II.5.2]{Maclane12} (or a diagram chase) that the second row is also exact. It is easy to see that $s_n: \Aff(T) \to GL_n^0(A)/\ker \Delta^n \subseteq GL_n(A)/\ker\Delta^n$ is also a splitting for the second row as
  \begin{equation} \chi_n \circ s(\widehat{a}) = \chi_n([e^a]) = \widehat{\log |e^a|} = \widehat{a}. \end{equation}
  \end{proof}

\begin{cor}\label{Ktheorysplitting}
          Suppose that $A$ is a unital C*-algebra and let $n \in \bN$. If $\ker \Delta^n|_{U_n^0(A)} = DU_n^0(A)$ and $\ker \Delta^n = DGL_n^0(A)$, then 
          \begin{equation} \begin{tikzcd}
0 \arrow[r] & U_n(A)/DU_n^0(A) \arrow[r, "\iota_n"] & GL_n(A)/DGL_n^0(A) \arrow[r, "\chi_n"] & \Aff T(A)  \arrow[r] \arrow[l, "s_n"', bend right=79] & 0
\end{tikzcd} \end{equation}
          is a split short exact sequence. In particular, for $n = \infty$, we have a natural split short exact sequence
          \begin{equation} \begin{tikzcd}
0 \arrow[r] & \ku(A) \arrow[r, "\iota"] & \ka(A) \arrow[r, "\chi_\infty"] & \Aff T(A) \arrow[r] \arrow[l, "s"', bend right=79] & 0.
\end{tikzcd} \end{equation}
        \end{cor}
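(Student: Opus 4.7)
The plan is to read off this corollary from Theorem \ref{glnsplitting} by substituting the equalities supplied in the hypothesis. Under the assumptions $\ker \Delta^n|_{U_n^0(A)} = DU_n^0(A)$ and $\ker \Delta^n = DGL_n^0(A)$, the quotients appearing in Theorem \ref{glnsplitting} become $U_n(A)/DU_n^0(A)$ and $GL_n(A)/DGL_n^0(A)$ respectively, and the split short exactness (with splitting $s_n \colon \hat{a} \mapsto [e^a]$ furnished by the same theorem) is precisely the sequence claimed at finite level. No additional work is needed at finite $n$.

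For the ``in particular'' assertion at $n = \infty$, the hypothesis is understood as the pair of identifications described at the start of Section \ref{nonhausdorff}, namely $DU_\infty^0(A) = \ker \Delta|_{U_\infty^0(A)}$ and $DGL_\infty^0(A) = \ker \Delta$. The finite-level output then reads
\begin{equation}
0 \to U_\infty(A)/DU_\infty^0(A) \to GL_\infty(A)/DGL_\infty^0(A) \to \Aff T(A) \to 0,
\end{equation}
so the remaining task is to identify these quotients with $\ku(A)$ and $\ka(A)$. This reduces to the Whitehead-type identities $DU_\infty^0(A) = DU_\infty(A)$ and $DGL_\infty^0(A) = DGL_\infty(A)$: for any $u \in U_\infty(A)$ the block matrix $u \oplus u^*$ is connected to the identity in $U_\infty^0(A)$ via a standard rotation path, so after stabilization every commutator in $U_\infty(A)$ already lies in $DU_\infty^0(A)$; an analogous argument using elementary matrices handles $GL_\infty$.

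Finally, the naturality claim at $n=\infty$ needs a short separate verification but is essentially transparent: $\iota$ is induced by the inclusion $U \hookrightarrow GL$, $\chi_\infty$ by $x \mapsto \widehat{\log|x|}$, and $s$ by $\hat{a} \mapsto [e^a]$, all of which commute with the functorial action of unital $*$-homomorphisms (since such maps preserve exponentials and commute with the continuous functional calculus used to form $\log|x|$, and they induce the canonical map $\hat a \mapsto \widehat{\phi(a)}$ on $\Aff T$). The main potential obstacle is the $n = \infty$ passage via the Whitehead identifications, but this is routine, so in the end the corollary is essentially a relabeling of Theorem \ref{glnsplitting}.
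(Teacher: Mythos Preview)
Your proposal is correct and follows essentially the same approach as the paper: both deduce the finite-$n$ case directly from Theorem \ref{glnsplitting} by substituting the hypotheses, and both handle $n=\infty$ via the Whitehead identity $DGL_\infty^0(A)=DGL_\infty(A)$ (the paper spells this out with the explicit $3\times 3$ commutator trick, which is exactly the computation underlying your rotation/stabilization sketch). Your added remark on naturality is fine and is not elaborated in the paper's proof.
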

  \begin{proof}
    The first part follows from the above as $DU_n^0(A) = \ker \Delta^n|_{U_n^0(A)}$ and $DGL_n^0(A) = \ker \Delta^n$. For the last part, if $n = \infty$, then $DGL_{\infty}(A) = DGL_{\infty}^0(A)$ by Whitehead's lemma. Indeed, if $x \in GL_n(A)$ is a commutator, say $x = yzy^{-1}z^{-1}$, then $x \oplus 1 \oplus 1 \in GL_{3n}(A)$ can be written as a commutator as follows:
   \begin{equation} \label{eq:3x3-trick}
      \begin{pmatrix}
      x \\ & 1 \\ & & 1
      \end{pmatrix} =   \begin{pmatrix}
      y \\ & y^{-1} \\ & & 1
        \end{pmatrix}   \begin{pmatrix}
      z \\ & 1 \\ & & z^{-1}
          \end{pmatrix}   \begin{pmatrix}
      y^{-1} \\ & y \\ & & 1
            \end{pmatrix}   \begin{pmatrix}
      z^{-1} \\ & 1 \\ &  & z
    \end{pmatrix}. \end{equation}
    The four matrices on the right are connected to the identity by Whitehead's lemma (see \cite[Lemma 2.1.5]{RordamKBook}). 
  \end{proof}

  The above split exact sequence yields that
    \begin{equation}
      \label{eq:alg-K-theory-splitting}     \ka(A) \simeq \ku(A) \oplus \Aff T(A)
    \end{equation}
    naturally via the isomorphism
      \begin{equation}
        [x] \mapsto [u_x] \oplus \widehat{\log|x|}.
      \end{equation}
  The following is an immediate consequence.

            \begin{cor}
              Let $A,B$ be unital C*-algebras such that
                  \begin{equation} DU_{\infty}^0(A) = \ker \Delta|_{U_{\infty}^0(A)} \text{ and }DGL_{\infty}^0(A) = \ker \Delta. \end{equation}
                If $x,y \in GL_{\infty}(A)$, the following are equivalent. 
                  \begin{enumerate}
                    \item $[u_x] = [u_y]$ in $\ku(A)$ and $\widehat{\log|x|} = \widehat{\log|y|}$ in $\Aff T(A)$;
                    \item $[x] = [y]$ in $\ka(A)$. 
                  \end{enumerate}
            \end{cor}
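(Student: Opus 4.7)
The plan is to read the corollary off directly from Corollary \ref{Ktheorysplitting}. Under the hypotheses $DU_\infty^0(A) = \ker\Delta|_{U_\infty^0(A)}$ and $DGL_\infty^0(A) = \ker\Delta$, that corollary already supplies a natural group isomorphism
\[ \Phi : \ka(A) \to \ku(A) \oplus \Aff T(A), \qquad [x] \mapsto \bigl([u_x],\,\widehat{\log|x|}\bigr), \]
as recorded in \eqref{eq:alg-K-theory-splitting} (here $x = u_x|x|$ is the polar decomposition). The present corollary is then just a coordinate-wise rephrasing of the bijectivity of $\Phi$.

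First I would briefly confirm that each coordinate of $\Phi$ is well-defined on $\ka(A)$, since elements of $GL_\infty(A)$ are represented in various $GL_n(A)$. For $x \in GL_n(A)$, polar decomposition inside the unital C*-algebra $M_n(A)$ produces $u_x \in U_n(A)$ and $|x| \in GL_n(A)$; compatibility with the connecting maps $z \mapsto z \oplus 1$ is immediate from $u_{x \oplus 1} = u_x \oplus 1$ and $\log|x \oplus 1| = \log|x| \oplus 0$, using the convention that the canonical extension of a trace to $M_\infty(A)$ ignores zero blocks. Hence both $[u_x] \in \ku(A)$ and $\widehat{\log|x|} \in \Aff T(A)$ depend only on the $\ka$-class of $x$.

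The remainder is purely formal. Since $\Phi$ is a group isomorphism by Corollary \ref{Ktheorysplitting}, one has $[x] = [y]$ in $\ka(A)$ if and only if $\Phi([x]) = \Phi([y])$, which by the definition of the product structure on $\ku(A) \oplus \Aff T(A)$ holds if and only if $[u_x] = [u_y]$ in $\ku(A)$ and $\widehat{\log|x|} = \widehat{\log|y|}$ in $\Aff T(A)$. This is exactly the equivalence of (1) and (2).

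There is essentially no obstacle here: all of the substantive work—the splitting of the sequence, the identification of $\ker\Delta$ with the derived subgroup under the regularity hypotheses, and the use of Whitehead's lemma to pass from $DGL_\infty^0$ to $DGL_\infty$—has already been carried out in Corollary \ref{Ktheorysplitting}. The only small thing worth flagging, as in the paragraph above, is that the two coordinate projections are compatible with the direct limit structure on $GL_\infty(A)$.
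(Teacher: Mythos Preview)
Your proposal is correct and follows exactly the paper's approach: the paper states that the corollary is ``an immediate consequence'' of the isomorphism $\ka(A)\simeq\ku(A)\oplus\Aff T(A)$, $[x]\mapsto [u_x]\oplus\widehat{\log|x|}$, recorded just after Corollary~\ref{Ktheorysplitting}, and you spell this out in slightly more detail (including the harmless check of compatibility with the connecting maps).
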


        For $\phi: A \to B$ a *-homomorphism between unital C*-algebras, denote by
          \begin{enumerate}
            \item $\ku(\phi):\ku(A) \to \ku(B)$;
            \item $\ka(\phi): \ka(A) \to \ka(B)$;
            \item $T(\phi): T(B) \to T(A)$
          \end{enumerate}
          the maps induced by $\phi$.

            \begin{cor}
              Let $A,B$ be unital C*-algebras such that
                \begin{itemize}
                  \item $DU_{\infty}^0(A) = \ker \Delta_A|_{U_{\infty}^0(A)}$ and $DGL_{\infty}^0(A) = \ker \Delta_B$;
                  \item $DU_{\infty}^0(B) = \ker \Delta_B|_{U_{\infty}^0(A)}$ and $DGL_{\infty}^0(B) = \ker \Delta_B$.
                \end{itemize}
                Let $\phi,\psi: A \to B$ be unital *-homomorphisms. The following are equivalent.
                  \begin{enumerate}
                    \item $\ku(\phi) = \ku(\psi)$ and $T(\phi) = T(\psi)$;
                    \item $\ka(\phi) = \ka(\psi)$.
                  \end{enumerate}
            \end{cor}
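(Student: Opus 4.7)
The plan is to deduce this corollary from the natural direct-sum decomposition in the previous corollary (equation \eqref{eq:alg-K-theory-splitting}), which applies to both $A$ and $B$ under the standing hypotheses. Once that decomposition is shown to be natural in the C*-algebra, the equivalence (1) $\Leftrightarrow$ (2) becomes essentially a coordinate-wise comparison, together with the fact that $\Aff T(\phi)$ and $T(\phi)$ determine each other.

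First, I would verify that the isomorphism $[x] \mapsto [u_x] \oplus \widehat{\log|x|}$ from \eqref{eq:alg-K-theory-splitting} is natural. Given a unital *-homomorphism $\phi: A \to B$ and the polar decomposition $x = u_x|x|$ of some $x \in GL_\infty(A)$, applying $\phi$ gives $\phi(x) = \phi(u_x)\phi(|x|)$, which is a polar decomposition of $\phi(x)$ (since $\phi(u_x)$ is unitary and $\phi(|x|)$ is positive invertible); by uniqueness of polar decomposition, $u_{\phi(x)} = \phi(u_x)$ and $|\phi(x)| = \phi(|x|)$. Moreover, using functional calculus and continuity of $\phi$, $\log|\phi(x)| = \phi(\log|x|)$, so for every $\tau \in T(B)$,
\begin{equation}
\widehat{\log|\phi(x)|}(\tau) = \tau(\phi(\log|x|)) = \widehat{\log|x|}(T(\phi)(\tau)) = \Aff T(\phi)\bigl(\widehat{\log|x|}\bigr)(\tau).
\end{equation}
This shows that, under the natural isomorphism \eqref{eq:alg-K-theory-splitting}, the map $\ka(\phi)$ corresponds precisely to $\ku(\phi) \oplus \Aff T(\phi)$.

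With naturality in hand, $\ka(\phi) = \ka(\psi)$ is equivalent to the conjunction of $\ku(\phi) = \ku(\psi)$ and $\Aff T(\phi) = \Aff T(\psi)$. It remains to observe that $\Aff T(\phi) = \Aff T(\psi)$ if and only if $T(\phi) = T(\psi)$: the ``if'' direction is by definition, while the ``only if'' direction follows from the fact that $\Aff T(A)$ separates points of $T(A)$, so if $\widehat{a}(T(\phi)(\tau)) = \widehat{a}(T(\psi)(\tau))$ for every $a \in A_{sa}$ and every $\tau \in T(B)$, then $T(\phi)(\tau) = T(\psi)(\tau)$ for all $\tau$.

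There is essentially no obstacle beyond correctly recording naturality; the content of the corollary is entirely packaged in the preceding direct-sum decomposition, and the argument is a formal coordinate-wise comparison combined with the standard duality between $T(A)$ and $\Aff T(A)$.
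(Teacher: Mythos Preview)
Your proposal is correct and follows precisely the approach the paper intends: the paper states the natural isomorphism $\ka(A) \simeq \ku(A) \oplus \Aff T(A)$ via $[x] \mapsto [u_x] \oplus \widehat{\log|x|}$ and presents this corollary as an immediate consequence, without spelling out the proof. Your verification of naturality (via compatibility of polar decomposition and functional calculus with $*$-homomorphisms) and the equivalence $\Aff T(\phi) = \Aff T(\psi) \Leftrightarrow T(\phi) = T(\psi)$ is exactly what is needed to make that implicit argument explicit.
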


            There are many classes of unital C*-algebras satisfying the two hypotheses of the above corollary \cite{dlHS84b,Thomsen93,Ng14,NgRobert17,NgRobert15}. In the penultimate, it is shown there that the hypotheses hold in the case that $A$ is a unital, separable, simple, pure C*-algebra of stable rank one such that every 2-quasitracial state is a trace, and the in the latter it is shown to hold  for $M_3(A)$ whenever $A$ is pure \cite[Definition 3.6]{Winter12}.

      \begin{cor}\label{cor:pdak-examples-of-ker}
        Let $A$ be a unital, simple, separable, pure C*-algebra of stable rank one such that every 2-quasitrace is a trace. Then there is a natural isomorphism
        \begin{equation} \ka(A) \simeq \ku(A) \oplus \Aff T(A). \end{equation}
      \end{cor}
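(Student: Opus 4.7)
The proof will be an essentially immediate application of Corollary \ref{Ktheorysplitting} in the case $n = \infty$. The only thing I need to verify is that, under the hypotheses on $A$, the two equalities
\[ DU_{\infty}^0(A) = \ker \Delta|_{U_{\infty}^0(A)} \quad \text{and} \quad DGL_{\infty}^0(A) = \ker \Delta \]
hold. Both of these are established by Ng--Robert in \cite{NgRobert17}: for a unital, simple, separable, pure C*-algebra of stable rank one in which every 2-quasitrace is a trace, the commutator subgroups of $U_\infty^0(A)$ and $GL_\infty^0(A)$ are automatically closed, and therefore coincide with the kernels of the (non-Hausdorffized) de la Harpe--Skandalis determinant. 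Equivalently, via Lemma \ref{lem:pdak-comm-int-unitary}, these commutator subgroups coincide with the closed commutator subgroups $CU_\infty^0(A)$ and $CGL_\infty^0(A)$ respectively.

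With these equalities in hand, Corollary \ref{Ktheorysplitting} directly produces the natural split short exact sequence
\[ 0 \to \ku(A) \xrightarrow{\iota} \ka(A) \xrightarrow{\chi_\infty} \Aff T(A) \to 0, \]
with splitting $s$ sending $\widehat{a}$ to $[e^a]$. Applying \eqref{eq:alg-K-theory-splitting} yields the claimed natural isomorphism $\ka(A) \simeq \ku(A) \oplus \Aff T(A)$, implemented by $[x] \mapsto [u_x] \oplus \widehat{\log|x|}$ where $x = u_x|x|$ is the polar decomposition. I do not anticipate any substantive obstacle in this argument; the entire proof reduces to quoting the Ng--Robert identification of commutators with kernels of $\Delta$ and invoking the previous corollary. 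The only minor point worth emphasizing is that both conditions (unitary and invertible versions) are needed and both are indeed addressed in \cite{NgRobert17}.
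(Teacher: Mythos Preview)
Your proposal is correct and matches the paper's approach exactly: the paper does not give a separate proof, but in the paragraph immediately preceding the corollary it records that \cite{NgRobert17} establishes the two equalities $DU_{\infty}^0(A) = \ker \Delta|_{U_{\infty}^0(A)}$ and $DGL_{\infty}^0(A) = \ker \Delta$ under these hypotheses, after which Corollary~\ref{Ktheorysplitting} applies. Your ``therefore'' is also logically sound, since $DU_\infty^0(A) \subseteq \ker\Delta \subseteq \ker\bar\Delta = CU_\infty^0(A)$, so closedness of the commutator subgroup forces all three to coincide.
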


\subsection{Hausdorffized algebraic $K$-theory}

In the Hausdorffized setting, we obtain similar results by the same arguments. However, in this case, we have $\ker\bar{\Delta}^n|_{U_n^0(A)} = CU_n^0(A)$ and $\ker\bar{\Delta}^n = CGL_n^0(A)$ by Lemma \ref{lem:pdak-comm-int-unitary}. 
Let
\begin{equation}\label{eq:hausdorffized-maps}  
\begin{split}
    \bar{\iota}_n&:U_n(A)/CU_n^0(A) \to GL_n(A)/CGL_n^0(A), \\
    \bar{\chi}_n&: GL_n(A)/CGL_n^0(A) \to \Aff T(A),\quad\text{and} \\
    \bar{s}_n&: \Aff T(A) \to GL_n(A)/CGL_n^0(A)
  \end{split} \end{equation}
  be the variants of the maps $\iota_n,\chi_n,s_n$ in the previous section (so our domains and codomains are now topological).
  Identifying $CU_n^0(A) = \ker\ov\Delta_n$ and applying the arguments from Section \ref{nonhausdorff} gives that each of these maps are well-defined group homomorphisms for $n \in \bN \cup \{\infty\}$. 
  In the Hausdorffized setting, we show that these maps are also continuous. First a lemma to handle the $n = \infty$ case.

  \begin{lemma}\label{lem:open-quotient-map}
    Let $G = \cup_n G_n$ be an increasing union of topological groups and equip $G$ with the inductive limit topology. Let $H \leq G$ be a subgroup such that the closure $CH$ of $H$ is also a subgroup of $G$. Then the quotient map $q: G \to G/CH$ is an open map. 
      \begin{proof}
        Let $S \subseteq G$ be open. As $G/CH$ has the quotient topology, the set $q(S) \subseteq G/CH$ is open if and only if $q^{-1}(q(S)) \subseteq G$ is open in $G$. Thinking of $G/CH$ as the space of $CH$-orbits of $G$ where $CH \act G$ by right translation, we have that
          \begin{equation}
            q^{-1}(q(S)) = \bigcup_{h \in CH} Sh
          \end{equation}
          which is open if $S$ is, since right translation still yields a homeomorphism in the inductive limit topology -- see \cite[Proposition 1.1(ii)]{TatsuumaShimomuraHirai98}. 
      \end{proof}
  \end{lemma}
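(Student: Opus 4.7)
The plan is to reduce the statement to the standard group-theoretic fact that the quotient map by a subgroup is open in the setting of topological groups, being careful because $G$ itself need not be a topological group. The key observation is that, even though joint continuity of multiplication can fail in the inductive limit topology, right translation by a \emph{single fixed} element is still a homeomorphism, which is the only ingredient actually needed.

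First, I would recall the elementary fact that a quotient map $q \colon G \to G/K$ (for any subgroup $K$) is open precisely when, for every open $S \subseteq G$, the saturation $q^{-1}(q(S))$ is open in $G$. Writing out cosets, this saturation equals
\begin{equation}
q^{-1}(q(S)) \;=\; \bigcup_{h \in CH} Sh,
\end{equation}
so the task reduces to showing each translate $Sh$ is open in $G$.

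Next I would argue that for fixed $h \in G$, the right translation map $R_h \colon G \to G$, $g \mapsto gh$, is a homeomorphism in the inductive limit topology. Since $h$ lies in some $G_{n_0}$, for every $n \geq n_0$ the restriction $R_h|_{G_n}$ is a continuous map $G_n \to G_n$ (using that each $G_n$ is a topological group), and similarly for $n < n_0$ it factors continuously through $G_{n_0}$. By the universal property of the inductive limit topology, $R_h$ is continuous on $G$; the same argument applied to $R_{h^{-1}}$ gives the two-sided inverse and hence a homeomorphism. This is precisely the content of \cite[Proposition 1.1(ii)]{TatsuumaShimomuraHirai98}, so I would cite it for this step rather than reproving it.

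With $R_h$ a homeomorphism, each $Sh = R_h(S)$ is open, and the union $\bigcup_{h \in CH} Sh$ is therefore open in $G$. This finishes the proof. The main obstacle — and the reason the lemma is worth isolating — is the first step of justifying that right translation remains a homeomorphism, since the inductive limit topology on $G$ is known to fail joint continuity of multiplication in general; once that is settled, the remainder is routine.
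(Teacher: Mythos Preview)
Your proof is correct and follows essentially the same approach as the paper: reduce openness of $q$ to openness of the saturation $q^{-1}(q(S)) = \bigcup_{h \in CH} Sh$, and then invoke that right translation by a fixed element is a homeomorphism in the inductive limit topology (citing \cite[Proposition 1.1(ii)]{TatsuumaShimomuraHirai98}). You give slightly more detail on why $R_h$ is a homeomorphism, but the argument and the key citation are identical.
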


    \begin{prop}\label{prop:cts-and-open}
    The maps in (\ref{eq:hausdorffized-maps}) are well-defined, continuous group homo-morphisms.
    Moreover, $\ov\iota$ and $\ov\chi$ are open onto their images. 
    \begin{proof}
      A straightforward adaptation of the arguments of the previous section shows that these are well-defined group homomorphisms.
      We work with the $n = \infty$ case throughout, as the $n \in \bN$ case is similar, and easier due to the fact that $GL_n(A),U_n(A)$ are topological groups.

      Let us show that $\ov\iota$ is continuous. The diagram
        \begin{equation}
          \begin{tikzcd}
            U_\infty(A) \arrow[r, "\sigma"] \arrow[d , "q_U"'] & GL_\infty(A) \arrow[d, "q_{GL}"] \\
\hku(A) \arrow[r, "\ov\iota"']   & \hka(A) 
\end{tikzcd}
        \end{equation}
        commutes where the left and right maps are quotient maps and $\sigma$ is the canonical inclusion. We note that for any subset $S \subseteq \hka(A)$ the commutation of the above diagram gives that
        \begin{equation}
          (q_U)^{-1}\left(\ov\iota^{-1}(S)\right) = \sigma^{-1}\left(q_{GL}^{-1}(S)\right).
        \end{equation}
        Therefore if $S \subseteq \hka(A)$ is open, then
        \begin{equation}\label{eq:oviota-diagram}
            \begin{split}
            \ov\iota^{-1}(S) &= q_U\left(q_U^{-1}\left( \ov\iota^{-1}(S)\right)\right) \\
                             &= q_U\left(\sigma^{-1}\left( q_{GL}^{-1}(S)\right)\right),
            \end{split}
          \end{equation}
          where $\sigma^{-1}\left(q_{GL}^{-1}(S)\right)$ is open because both $q_{GL}$ and $\sigma$ are continuous. As $q_U$ is open by Lemma \ref{lem:open-quotient-map}, it follows that $\ov\iota^{-1}(S)$ is open. This shows continuity.

        Let us show that $\ov\iota$ is open onto its image. We note that taking the unitary part of the polar decomposition $\omega_n: GL_n(A) \to U_n(A) \subseteq U_{\infty}(A)$ is continuous for all $n$ and therefore induces a continuous map $\omega: GL_{\infty}(A) \to U_{\infty}(A)$. Since $CGL_{\infty}(A) \cap U_{\infty}(A) = CU_{\infty}(A)$ by Lemma \ref{lem:pdak-comm-int-unitary}, we get an induced continuous map
\begin{equation}
        \ov \omega: \hka(A) \to \hku(A)
\end{equation}
        which clearly satisfies
\begin{equation}
\ov\omega \circ \ov\iota = \id_{\hku(A)} \text{ and } \ov\iota \circ \ov\omega|_{\ov\iota\left(\hku(A)\right)} = \id_{\ov\iota\left(\hku(A)\right)}.
\end{equation}        
        Now if $S \subseteq \hku(A)$ is open, then it is easily seen that
        \begin{equation}
        \ov\iota(S) = \ov\iota\left(\hku(A)\right) \cap ({\ov\omega})^{-1}(S).
        \end{equation}
        As $\ov\omega$ is continuous, $(\ov\omega)^{-1}(S) \subseteq \hka(A)$ is open and so $\ov\iota(S) \subseteq \ov\iota\left(\hku(A)\right)$ is open with respect to the subspace topology. This shows that $\ov\iota$ is open onto its image. 

          For $\ov\chi$, let $g: GL_{\infty}(A) \to \Aff T(A)$ denote the map $g(x) := \widehat{\log|x|}$. The diagram
            \begin{equation}\label{eq:showing-ovchi-is-open-g}
              \begin{tikzcd}
GL_{\infty}(A) \arrow[rr, "g"] \arrow[rd, "q_{GL}"'] &                               & \Aff T(A) \\
                                                & \hka(A) \arrow[ru, "\ov\chi"'] &          
\end{tikzcd}
            \end{equation}
            commutes, so we have that for $S \subseteq \Aff T(A)$
              \begin{equation}
                  \begin{split}
                    \ov\chi^{-1}(S) &= q_{GL}\left(q_{GL}^{-1}\left(\ov\chi^{-1}(S)\right)\right) \\
                                    &= q_{GL}\left(g^{-1}(S)\right).
                  \end{split}
              \end{equation}
              Thus since we know that $q_{GL}$ is open by Lemma \ref{lem:open-quotient-map}, it suffices to show that $g$ is continuous. But $g$ is continuous if $g|_{GL_n(A)}$ is continuous for all $n$,\footnote{If $X = \cup_nX_n$ is equipped with the inductive limit topology and $f: X \to Y$ is a function such that $f|_{X_n}$ is continuous for all $n$, then $f$ is continuous. To see this, let $S \subseteq Y$ be open and note that $f^{-1}(S) = \cup_n f|_{X_n}^{-1}(S)$ is open.} and this is true: indeed, $g|_{GL_n(A)}$ can be written as the composition
                \begin{equation}
                  \begin{tikzcd}
G_n(A) \arrow[r, "l|_{GL_n(A)}"] & A_{sa} \arrow[r, "\Tr|_{A_{sa}}"] & \Aff T(A)
\end{tikzcd}
                \end{equation}
                where $l: GL_{\infty}(A) \to A_{sa}$ is the map given by $l(x) := \tr \log|x|$ where $\tr: M_{\infty}(A) \to A$ is the unnormalized trace. Seeing that $l|_{GL_n(A)}$ is continuous follows easily: if $x_n \to x$ in $GL_n(A)$, then $\tr \log|x_n| \to \tr \log|x|$. 

          To show that $\ov\chi$ is open, we again appeal to the diagram (\ref{eq:showing-ovchi-is-open-g}). It suffices to show that $g$ is open -- and to this end it suffices to show that $g|_{GL_n}$ is open for each $n$.\footnote{If $X = \cup_n X_n$ is an increasing union of topological spaces with the inductive limit topology and $Y$ is another topological space, then for $S \subseteq X$, we have that $f(S) = \cup_n f(S \cap X_n)$.}
  For $GL_n(A)$, $\widehat{\log |x_0|} = \widehat{\tr \log|x_0|}$, where $\tr$ is the unnormalized trace, and so we can restrict to the case where $n = 1$. 
  Let us without loss of generality work with open balls around the identity: let $\ee > 0$ and consider
    \begin{equation}
      S := \{x \in GL(A) \mid \|x - 1\| < \ee\}.
    \end{equation}
  Looking at the image of $S$ under $g_1 :=g|_{GL(A)}$, we have
    \begin{equation}
      g_1(S) = \{\widehat{\log|x|} \mid \|x - 1\| < \ee\}.
    \end{equation}
  Let $x_0 \in GL(A)$ be such that $x_0 \approx_\ee 1$ and let us show that there is an open ball around $\widehat{\log|x_0|}$ that is contained in $g_1(S)$. 
  First off note that for $\hat{h} \in \Aff T(A)$, with $h \in A_{sa}$, we have
  \begin{equation}\label{eq:chi-open}
        \begin{split}
          \|\widehat{\log|x_0|} - \hat{h}\| &= \|\widehat{\log|x_0|} - \widehat{\log|e^h|}\| \\
                                          &= \|\ov\chi([x_0]) - \ov\chi([e^h])\| \\
                                          &= \|\ov\chi([x_0e^{-h}])\| \\
                                          &= \|\widehat{\log|x_0e^{-h}|}\|.
        \end{split}
    \end{equation}
    Now let $\delta > 0$ be such that whenever $a \in A$, we have $\|a\| < \delta$ implies that $\|e^a - 1\| < \ee$.
    Then, for $\hat{h} \in \Aff T(A)$ with $\widehat{\log|x_0|} \approx_{\frac{\delta}{2}} \hat{h}$, we have by (\ref{eq:chi-open}) that
      \begin{equation}
        \|\widehat{\log|x_0e^{-h}|}\| < \frac{\delta}{2}.
      \end{equation}
      Find a self-adjoint lift, say $k \in A_{sa}$, of $\widehat{\log |x_0e^{-h}|}$ with $\|k\| < \delta$.
      Then we have that $\|e^k - 1\| < \ee$ and 
      \begin{equation}
        g_1(e^k) = \hat{k} = \widehat{\log|x_0e^{-h}|}.
      \end{equation}
      This shows that $B_{\frac{\delta}{2}}(\widehat{\log|x_0|}) \subseteq g_1(S)$.

          Finally, let us show that $\ov s$ is continuous. We have that 
            \begin{equation}
              \begin{tikzcd}
A_{sa} \arrow[r, "\alpha"] \arrow[d, "\Tr|_{A_{sa}}"'] & GL_{\infty}(A) \arrow[d, "q_{GL}"] \\
\Aff T(A) \arrow[r, "\ov s"']                          & \hka(A)                           
\end{tikzcd}
            \end{equation}
            commutes where $\alpha(a) := e^a$ -- note that $\alpha$ is continuous and that the image of $\alpha$ is contained in $GL(A) \subseteq GL_{\infty}(A)$ ($\alpha$ is however \emph{not} a homomorphism). Consequently if $S \subseteq \hka(A)$ is open, then since $\Tr|_{A_{sa}}$ is surjective,
              \begin{equation}
                  \begin{split}
                    \ov s^{-1}(S) &= \Tr|_{A_{sa}}\left(\Tr_{A_{sa}}^{-1}\left(\ov s^{-1}(S)\right)\right) \\
                                  &= \alpha^{-1}\left(q_{GL}^{-1}(S)\right)
                  \end{split}
              \end{equation}
              is open as well since $\alpha$ and $q_{GL}$ are continuous. 
    \end{proof}
  \end{prop}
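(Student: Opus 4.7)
The plan is to split the proposition into its algebraic and topological content. Well-definedness and the group homomorphism identities for $\ov\iota_n$, $\ov\chi_n$, and $\ov s_n$ will be parallel to the arguments in Section \ref{nonhausdorff}, using Lemma \ref{lem:pdak-comm-int-unitary} to replace $\ker\Delta^n|_{U_n^0(A)}$ and $\ker\Delta^n$ by $CU_n^0(A)$ and $CGL_n^0(A)$ respectively. The real content is thus topological. For $n$ finite everything reduces to continuity/openness statements for honest topological groups; the $n=\infty$ case is the interesting one, since $GL_\infty(A)$ and $U_\infty(A)$ only carry the inductive limit topology and are not topological groups.

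The cornerstone for the topological arguments is a small auxiliary lemma: if $G=\bigcup_n G_n$ is a union of topological groups with the inductive limit topology and $H\leq G$ is a subgroup whose closure $CH$ is also a subgroup, then the quotient $q\colon G\to G/CH$ is open. This is because for any open $S\subseteq G$ the saturation $q^{-1}(q(S))=\bigcup_{h\in CH}Sh$ is open, right translation being a homeomorphism in the inductive limit topology. Granting this, continuity of each of $\ov\iota$, $\ov\chi$, $\ov s$ reduces to producing a continuous map at the level of the ambient groups: for $\ov\iota$, the inclusion $U_\infty(A)\hookrightarrow GL_\infty(A)$; for $\ov s$, the exponential $a\mapsto e^a$ from $A_{sa}$ into $GL(A)\subseteq GL_\infty(A)$; and for $\ov\chi$, the map $g(x):=\widehat{\log|x|}$, which is continuous on each $GL_n(A)$ and so on $GL_\infty(A)$. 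Pushing preimages of open sets back and forth using the openness of the quotient yields continuity in each case.

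For openness of $\ov\iota$, I would exploit polar decomposition: taking the unitary part $x\mapsto u_x$ is continuous $GL_n(A)\to U_n(A)$, hence assembles into a continuous $\omega\colon GL_\infty(A)\to U_\infty(A)$. Because $CGL_\infty(A)\cap U_\infty(A)=CU_\infty(A)$ (immediate from Lemma \ref{lem:pdak-comm-int-unitary}), $\omega$ descends to a continuous $\ov\omega\colon \hka(A)\to\hku(A)$ that is a retraction of $\ov\iota$. Thus $\ov\iota$ has a continuous one-sided inverse on its image, which forces $\ov\iota$ to be a homeomorphism onto its image.

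Openness of $\ov\chi$ will be the main obstacle. Thanks to openness of the quotient, it suffices to show that $g$ itself is open; and since $GL_\infty(A)=\bigcup_n GL_n(A)$ with the inductive limit topology, it suffices to verify that $g|_{GL_n(A)}$ is open for each $n$. Composing with the unnormalized trace reduces the problem to $n=1$. There I plan to argue directly: given a basic open ball of radius $\ee$ around $x_0\in GL(A)$, pick $\delta>0$ with $\|a\|<\delta\Rightarrow\|e^a-1\|<\ee$; if $\hat h$ lies within $\delta/2$ of $\widehat{\log|x_0|}$, use the identity $\|\widehat{\log|x_0|}-\hat h\|=\|\widehat{\log|x_0e^{-h}|}\|$ together with the isometric identification $A_{sa}/A_0\simeq \Aff T(A)$ to lift $\widehat{\log|x_0e^{-h}|}$ to a self-adjoint $k\in A$ of norm less than $\delta$, and exhibit an explicit exponential-type element in the original ball whose $g$-image is $\hat h$. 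This exponential-lifting step is the delicate point, since it is precisely where the topology of $\Aff T(A)$ and that of $GL(A)$ must be matched via the isometry coming from \cite[Theorem 2.9]{CuntzPedersen79}.
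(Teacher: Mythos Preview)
Your proposal is correct and follows essentially the same route as the paper's proof: the auxiliary lemma on openness of quotient maps for inductive limit groups, the reduction of continuity to continuous lifts $(\sigma,\ g,\ \alpha)$ at the level of the ambient groups, the polar-decomposition retraction $\ov\omega$ for openness of $\ov\iota$, and the exponential-lifting argument (reducing to $n=1$ via the unnormalized trace and using the isometric identification $A_{sa}/A_0\simeq\Aff T(A)$) for openness of $\ov\chi$ all match the paper's approach step for step.
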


    \begin{theorem}\label{cor:pdak-top-group-splitting}
      For any unital C*-algebra $A$ and $n \in \bN\cup\{\infty\}$, the sequence 
          \begin{equation} \begin{tikzcd}
            0 \arrow[r] & U_n(A)/CU_n^0(A) \arrow[r, "\bar\iota_n"] & GL_n(A)/CGL_n^0(A) \arrow[r, "\bar\chi_n"] & \Aff T(A)  \arrow[r] \arrow[l, "\bar{s}_n"', bend right=79] & 0
\end{tikzcd} \end{equation}
is a split short exact sequence of topological groups. In particular, we have the following split short exact sequence of topological groups:
          \begin{equation} \begin{tikzcd}
            0 \arrow[r] & \hku(A) \arrow[r, "\bar\iota"] & \hka(A) \arrow[r, "\bar\chi_\infty"] & \Aff T(A)  \arrow[r] \arrow[l, "\bar{s}"', bend right=79] & 0
\end{tikzcd} \end{equation}
  \begin{proof}
    The same argument as in Theorem \ref{glnsplitting} gives an algebraic splitting.
    The fact that this is a splitting of topological groups follows as $\ov\iota_n,\ov\chi_n,\ov s_n$ are all continuous and $\ov\iota_n$ and $\ov\chi_n$ are open onto their images by Proposition \ref{prop:cts-and-open}. 
  \end{proof}
    \end{theorem}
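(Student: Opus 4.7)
The plan is to port the algebraic argument of Theorem \ref{glnsplitting} essentially verbatim, substituting $\ker\Delta^n|_{U_n^0(A)}$ and $\ker\Delta^n$ with $CU_n^0(A)$ and $CGL_n^0(A)$ respectively; these substitutions are legal by Lemma \ref{lem:pdak-comm-int-unitary}. The advantage of the Hausdorffized setting is that no $K$-theoretic regularity hypotheses on $A$ are needed, since Lemma \ref{lem:pdak-comm-int-unitary} identifies the relevant kernels unconditionally. The topological content (continuity and openness of the connecting maps) has already been worked out in Proposition \ref{prop:cts-and-open}, so the theorem will reduce to combining an algebraic splitting with these topological facts.

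Concretely, I would first handle the connected-component sequence
\begin{equation}
0 \to U_n^0(A)/CU_n^0(A) \xrightarrow{\bar\iota_n^0} GL_n^0(A)/CGL_n^0(A) \xrightarrow{\bar\chi_n^0} \Aff T(A) \to 0.
\end{equation}
Injectivity of $\bar\iota_n^0$ is the observation $CGL_n^0(A) \cap U_n^0(A) = CU_n^0(A)$ recorded after Lemma \ref{lem:pdak-comm-int-unitary}. The identity $\bar\chi_n^0 \circ \bar s_n = \mathrm{id}$ gives surjectivity and splitting, since $\bar\chi_n^0([e^a]) = \widehat{\log|e^a|} = \hat a$. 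For exactness in the middle, if $\bar\chi_n^0([x]) = \widehat{\log|x|} = 0$, then by \eqref{eq:RealPart} we have $\bar\Delta^n(|x|) = 0$, so $\bar\Delta^n(x) = \bar\Delta^n(u_x)$ for the polar decomposition $x = u_x|x|$; since $\ker\bar\Delta^n = CGL_n^0(A)$, this means $[x] = [u_x]$ lies in the image of $\bar\iota_n^0$. I would then run the same $3 \times 3$ diagram chase as in the proof of Theorem \ref{glnsplitting}, whose bottom row $0 \to \pi_0(U_n(A)) \to \pi_0(GL_n(A)) \to 0$ is exact by polar decomposition, to upgrade the connected-component sequence to the full sequence; the splitting passes through the inclusion $GL_n^0(A)/CGL_n^0(A) \hookrightarrow GL_n(A)/CGL_n^0(A)$.

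Finally, I would combine this with Proposition \ref{prop:cts-and-open}, which establishes that $\bar\iota_n, \bar\chi_n, \bar s_n$ are continuous and that $\bar\iota_n, \bar\chi_n$ are open onto their images. Together with the algebraic splitting, these facts promote the canonical bijection
\begin{equation}
U_n(A)/CU_n^0(A) \oplus \Aff T(A) \to GL_n(A)/CGL_n^0(A),\qquad ([u],\hat a) \mapsto \bar\iota_n([u])\bar s_n(\hat a),
\end{equation}
to an isomorphism of topological groups (its continuous inverse can be written using the retraction $\bar\omega$ of Proposition \ref{prop:cts-and-open} as $[x] \mapsto (\bar\omega([x]), \bar\chi_n([x]))$). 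The main obstacle is already behind us: all delicate topological issues, particularly those stemming from the inductive limit topology in the case $n = \infty$ and from the failure of $GL_\infty(A), U_\infty(A)$ to be topological groups, are absorbed into Proposition \ref{prop:cts-and-open}; the argument for the theorem itself is then essentially formal.
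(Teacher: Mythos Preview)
Your proposal is correct and follows essentially the same approach as the paper's proof: port the algebraic argument of Theorem~\ref{glnsplitting} verbatim (using Lemma~\ref{lem:pdak-comm-int-unitary} to identify the kernels as $CU_n^0(A)$ and $CGL_n^0(A)$), then invoke Proposition~\ref{prop:cts-and-open} for the topological upgrade. You spell out more of the intermediate steps than the paper does, but the structure and key inputs are identical.
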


        \begin{cor}
          Let $A,B$ be unital C*-algebras, $x,y \in \hka(A)$. The following are equivalent.
            \begin{enumerate}
              \item $[u_x] = [u_y]$ in $\hku(A)$ and $\widehat{\log|x|} = \widehat{\log|y|}$ in $\Aff T(A)$;
              \item $[x] = [y]$ in $\hka(A)$. 
            \end{enumerate}
        \end{cor}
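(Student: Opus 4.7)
The plan is to deduce this corollary immediately from Theorem \ref{cor:pdak-top-group-splitting}. That theorem provides a split short exact sequence $0 \to \hku(A) \xrightarrow{\bar\iota} \hka(A) \xrightarrow{\bar\chi_\infty} \Aff T(A) \to 0$ with splitting $\bar s$, and so yields an internal direct sum decomposition $\hka(A) = \bar\iota(\hku(A)) \oplus \bar s(\Aff T(A))$ as topological groups.

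The key step is to identify the two components of a general element $[x] \in \hka(A)$ (where $x \in GL_\infty(A)$) under this decomposition. Starting from the polar decomposition $x = u_x|x|$, we have $[x] = \bar\iota([u_x]) + [|x|]$ in $\hka(A)$. Since $|x| = e^{\log|x|}$, and by the definition of $\bar s$ we have $\bar s(\hat{a}) = [e^a]$ for every $a \in A_{sa}$, it follows that $[|x|] = \bar s(\widehat{\log|x|})$. Hence
\begin{equation}
[x] = \bar\iota([u_x]) + \bar s(\widehat{\log|x|}),
\end{equation}
which explicitly realizes the direct sum decomposition on the level of elements.

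The equivalence of (1) and (2) now follows at once. If (1) holds, then both direct summands of $[x]$ and $[y]$ agree, so $[x] = [y]$ in $\hka(A)$. Conversely, if (2) holds, applying $\bar\chi_\infty$ gives $\widehat{\log|x|} = \widehat{\log|y|}$ in $\Aff T(A)$, while applying the polar-decomposition retraction $\bar\omega : \hka(A) \to \hku(A)$ constructed in the proof of Proposition \ref{prop:cts-and-open} gives $[u_x] = [u_y]$ in $\hku(A)$. There is no serious obstacle: all the work was carried out in establishing the splitting (continuity and openness of $\bar\iota, \bar\chi_\infty, \bar s$, and the existence of $\bar\omega$), and this corollary merely repackages that splitting as a statement about individual elements.
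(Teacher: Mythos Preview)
Your proof is correct and follows the same approach as the paper, which treats this corollary as an immediate consequence of Theorem~\ref{cor:pdak-top-group-splitting} (just as the non-Hausdorffized analogue was stated as immediate from the displayed isomorphism $[x]\mapsto [u_x]\oplus\widehat{\log|x|}$ after Corollary~\ref{Ktheorysplitting}). One very small point: when you write $[|x|]=\bar s(\widehat{\log|x|})$ because $|x|=e^{\log|x|}$, note that for $x\in GL_n(A)$ with $n>1$ the element $\log|x|$ lies in $M_n(A)_{sa}$ rather than $A_{sa}$, so this does not literally match the formula $\bar s(\hat a)=[e^a]$; however the equality still holds since $\bar\Delta(|x|)=\bar\Delta(e^a)$ for any $a\in A_{sa}$ with $\hat a=\widehat{\log|x|}$, and hence $[|x|]=[e^a]$ in $\hka(A)$.
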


    For $A,B$ unital C*-algebras, $\phi: A \to B$ a unital *-homomorphism, denote by 
      \begin{enumerate}
        \item $\hku(\phi): \hku(A) \to \hku(B)$;
        \item $\hka(\phi): \hka(A) \to \hka(B)$
      \end{enumerate}
      the maps induced by $\phi$.

      \begin{cor}
        Let $A,B$ be unital C*-algebras. Let $\phi,\psi: A \to B$ be unital *-homomorphisms. The following are equivalent.
          \begin{enumerate}
            \item $\hku(\phi) = \hku(\psi)$ and $T(\phi) = T(\psi)$;
            \item $\hka(\phi) = \hka(\psi)$. 
          \end{enumerate}
      \end{cor}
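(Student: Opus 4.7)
The plan is to leverage the naturality of the split short exact sequence from Theorem \ref{cor:pdak-top-group-splitting}, which decomposes $\hka$ into the unitary piece $\hku$ and the affine function piece $\Aff T$. The direction (1) $\Rightarrow$ (2) is then immediate: any unital *-homomorphism $\phi: A \to B$ induces compatible maps $\hku(\phi)$, $\hka(\phi)$, and a map $\Aff T(\phi): \Aff T(A) \to \Aff T(B)$ (given by pullback $f \mapsto f \circ T(\phi)$) that all fit into the natural commutative diagram of split exact sequences. If $\phi$ and $\psi$ agree on $\hku$ and on traces (hence on $\Aff T$), then they agree on the direct sum decomposition of $\hka$, so $\hka(\phi) = \hka(\psi)$.

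For (2) $\Rightarrow$ (1), I would use the two halves of the splitting separately. First, applying $\ov\omega: \hka(-) \to \hku(-)$ (the continuous map induced by the unitary part of the polar decomposition, used in the proof of Proposition \ref{prop:cts-and-open}) to the equality $\hka(\phi) = \hka(\psi)$ yields $\hku(\phi) = \hku(\psi)$, since $\ov\omega \circ \ov\iota = \id$ and this map is natural in the C*-algebra. Second, applying $\ov\chi$ naturally to $\hka(\phi) = \hka(\psi)$ gives that for every $x \in GL_\infty(A)$,
\begin{equation}
\widehat{\log|\phi(x)|} = \widehat{\log|\psi(x)|} \text{ in }\Aff T(B).
\end{equation}
Since $\phi,\psi$ are *-homomorphisms, $|\phi(x)| = \phi(|x|)$ and $\log|\phi(x)| = \phi(\log|x|)$, and likewise for $\psi$, so evaluating at $\tau \in T(B)$ shows $\tau(\phi(a)) = \tau(\psi(a))$ for every $a$ of the form $\log|x|$ with $x \in GL_\infty(A)$.

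The remaining step is to pass from this equality of affine-function pullbacks to the equality $T(\phi) = T(\psi)$. Since the image of $\log|\cdot|$ on $GL(A)$ is all of $A_{sa}$ (every $a \in A_{sa}$ equals $\log|e^a|$), we in fact obtain $\tau(\phi(a)) = \tau(\psi(a))$ for all $a \in A_{sa}$, and hence for all $a \in A$ by linearity. Thus $T(\phi)(\tau) = T(\psi)(\tau)$ for every $\tau \in T(B)$, i.e., $T(\phi) = T(\psi)$.

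The proof is essentially routine once Theorem \ref{cor:pdak-top-group-splitting} and its naturality are in hand; the only point that requires a moment of care is verifying that the $\Aff T$-component of the splitting is genuinely functorial in $\phi$ (so that the commutative diagram makes sense), which reduces to the identities $|\phi(x)| = \phi(|x|)$ and $\phi(\log|x|) = \log|\phi(x)|$ for *-homomorphisms $\phi$ acting on positive invertible elements.
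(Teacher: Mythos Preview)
Your proposal is correct and matches the paper's intended approach: the paper states this corollary without proof, as it is an immediate consequence of the natural split short exact sequence of Theorem~\ref{cor:pdak-top-group-splitting} (equivalently, the natural isomorphism $\hka(A) \simeq \hku(A) \oplus \Aff T(A)$). Your verification that $\Aff T(\phi) = \Aff T(\psi)$ forces $T(\phi) = T(\psi)$, via the observation that every $a \in A_{sa}$ arises as $\log|e^a|$, is exactly the small computation needed to close the argument.
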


        \section{Nonstable algebraic $K$-theory}\label{section:pdak-nonstable-algebraic-k-theory}

        Here we discuss some structure of the non-stable (both Hausdorffized and not) algebraic $K_1$-groups. In \cite[Theorem 3.2]{Thomsen95}, Thomsen proved that the map
        \begin{equation}
          U_{\infty}^0(A)/CU_{\infty}^0 \simeq \Aff T(A)/\ov{\rho_A(K_0(A))}
        \end{equation}
        given by $[u] \mapsto \ov\Delta(u)$ is a homeomorphic isomorphism. 
        It was noted that if $\pi(U^0(A)) \to K_0(A)$ is surjective, then of course we have that
          \begin{equation}
            U^0(A)/CU^0(A) \simeq U_n^0(A)/CU_n^0(A)
          \end{equation}
          for all $n \in \bN \cup \{\infty\}$.
        Indeed, if the canonical map $\pi_1(U^0(A)) \to K_0(A)$ is surjective, then the following diagram commutes
        \begin{equation}\label{eq:Thomsen-isomorphism}
            \begin{tikzcd}
U^0(A)/CU^0(A) \arrow[d, "\ov D_1"'] \arrow[r, "\ov i"]     & U_{\infty}^0(A)/CU_{\infty}^0(A) \arrow[d, "\ov D"] \\
\Aff T(A)/\ov{\tD(\pi_1(U^0(A)))} \arrow[r, "\id"'] & \Aff T(A)/\ov{\rho_A(K_0(A))}                   
\end{tikzcd}
          \end{equation}
          where $\ov i: U^0(A)/CU^0(A) \to U_{\infty}^0(A)/CU_{\infty}^0(A)$ is the canonical map, $\ov D_1, \ov D$ are maps factoring $\Delta^1$ and $\Delta$ through $CU^0(A)$ and $CU_{\infty}^0(A)$ respectively.
          As $\id,\ov D_1$ and $\ov D$ are all homeomorphic isomorphisms, it follows that the canonical map $\ov i$ is a homeomorphic isomorphism. 

          \begin{remark}
            More generally one can study the question of when 
              \begin{equation}
            U_n^0(A)/CU_n^0(A) \to U_m^0(A)/CU_m^0(A)
              \end{equation}
            is an isomorphism for all $m \geq n$, even in the case where $\pi_1(U^0(A)) \to K_0(A)$ may not be surjective.
            See \cite{GongLinXue15} for details.
            One can of course get similar results using the general linear invariants, as well as the purely algebraic variants under the assumptions that  $\ker\Delta_n|_{U_n^0(A)} = DU_n^0(A)$ or $\ker\Delta_n = DGL_n^0(A)$ for every $n$.
          \end{remark}

          A similar argument gives the following in the algebraic setting. 

          \begin{lemma}\label{lem:nonstable-isomorphism}
            Let $A$ be a unital C*-algebra and suppose that $\pi_1(U^0(A)) \to K_0(A)$ is surjective.
              \begin{enumerate}
                \item The canonical map $U^0(A)/\ker\Delta^1|_{U^0(A)} \to U_{\infty}^0(A)/\ker \Delta|_{U_{\infty}^0(A)}$ is an isomorphism. 
                \item The canonical map $GL^0(A)/\ker \Delta^1 \to GL_{\infty}^0(A)/\ker \Delta$ is an isomorphism.
              \end{enumerate}
                \begin{proof}
                  Writing out a similar diagram to (\ref{eq:Thomsen-isomorphism}), we have
                    \begin{equation}
                      \begin{tikzcd}
                        U^0(A)/\ker\Delta^1|_{U^0(A)} \arrow[d, "D_1"'] \arrow[r, "i"] & U_{\infty}^0(A)/\ker \Delta|_{U_{\infty}^0(A)} \arrow[d, "D"] \\
\Aff T(A)/\tD(\pi_1(U^0(A))) \arrow[r, "\id"'] & \Aff T(A)/\rho_A(K_0(A))                        
\end{tikzcd}
                    \end{equation}
                    The maps $\id,D_1,D$ are all group isomorphisms, so $i$ must be as well (the maps $i,D_1,D$ are the purely algebraic analogues of $\ov i, \ov D_1, \ov D$ above). 
                    We get a similar diagram in the $GL$ setting with $U$ replaced with  $GL$ and $\Aff T(A)$ replaced by $A/\ov{[A,A]}$.
                \end{proof}
          \end{lemma}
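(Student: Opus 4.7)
The plan is to mimic the Hausdorffized diagram chase from \eqref{eq:Thomsen-isomorphism} in the purely algebraic setting. I would arrange the square
\begin{equation*}
\begin{tikzcd}
U^0(A)/\ker\Delta^1|_{U^0(A)} \arrow[d, "D_1"'] \arrow[r, "i"] & U_\infty^0(A)/\ker\Delta|_{U_\infty^0(A)} \arrow[d, "D"] \\
\Aff T(A)/\tD^1(\pi_1(U^0(A))) \arrow[r, "\id"'] & \Aff T(A)/\rho_A(K_0(A)),
\end{tikzcd}
\end{equation*}
where $D_1$ and $D$ are the injections induced by $\Delta^1$ and $\Delta$ on the quotients by their respective kernels, $i$ is the canonical comparison map, and the bottom arrow is the identity on $\Aff T(A)$ passed to quotients. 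The goal is to show the three non-horizontal arrows are isomorphisms, which forces $i$ to be one as well.

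The first task is to legitimize the bottom arrow as the identity, which requires $\tD^1(\pi_1(U^0(A))) = \rho_A(K_0(A))$. One inclusion is automatic: by Remark \ref{rem:cts-to-piece-wise-smooth}(2), $\tD^1(\pi_1(U^0(A))) \subseteq \tD(\pi_1(U_\infty^0(A)))$, and this image equals $\rho_A(K_0(A))$ under the canonical identification $\pi_1(GL_\infty^0(A)) \simeq K_0(A)$ afforded by Proposition \ref{prop:detFacts}(4). The reverse inclusion is exactly where the surjectivity hypothesis enters: every class in $K_0(A)$ is already realized as the image of a loop in $U^0(A)$, so $\tD^1$ already hits $\rho_A(K_0(A))$.

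The second task is to confirm $D_1$ and $D$ are isomorphisms. Injectivity is immediate, since they are induced on quotients by the kernels of the defining homomorphisms. For surjectivity, the exponential provides preimages: for $a \in A_{sa}$, the unitary $e^{2\pi i a} \in U^0(A) \subseteq U_\infty^0(A)$ satisfies $\Delta^1(e^{2\pi i a}) = \Delta(e^{2\pi i a}) = [\widehat a]$ (via Proposition \ref{prop:detFacts}(2), subdividing the straight-line exponential path if needed so the norm condition holds, and (1) for multiplicativity). The same computation verifies commutativity of the square, after which a diagram chase yields that $i$ is an isomorphism.

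Part (2) then follows by the same template, with $U^0$ replaced by $GL^0$, $\Aff T(A)$ replaced by $A/\ov{[A,A]}$, and the self-adjoint exponential $e^{2\pi i a}$ replaced by $e^{2\pi i a}$ for arbitrary $a \in A$; the latter surjects onto $A/\ov{[A,A]}$ by the same $\tfrac{1}{2\pi i}\Tr$-formula. The main point of care throughout is keeping the identifications of the codomains of $\Delta^1$ and $\Delta$ straight, but this is just bookkeeping once Proposition \ref{prop:detFacts}(4) and the surjectivity hypothesis have been invoked.
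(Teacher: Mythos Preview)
Your proposal is correct and follows essentially the same approach as the paper's proof: both set up the identical commutative square with $D_1$, $D$, $i$, and the identity on the bottom, then conclude that $i$ is an isomorphism because the other three maps are. You simply supply the details the paper leaves implicit---namely, why the surjectivity hypothesis forces $\tD^1(\pi_1(U^0(A))) = \rho_A(K_0(A))$ and why exponentials witness the surjectivity of $D_1$ and $D$---and your treatment of part (2) (replacing $U$ by $GL$ and $\Aff T(A)$ by $A/\ov{[A,A]}$) matches the paper's one-line remark exactly.
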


          Using similar techniques to \cite{GongLinXue15}, we have the following. 

          \begin{lemma}\label{lem:non-stable-kernel}
              Let $A$ be a unital C*-algebra and suppose that $\pi_1(U^0(A)) \to K_0(A)$ is surjective.
                \begin{enumerate}
                  \item If $\ker \Delta^1|_{U^0(A)} = DU^0(A)$, then $\ker \Delta^n|_{U_n^0(A)} = DU_n^0(A)$ for all $n \in \bN \cup \{\infty\}$.
                  \item If $\ker \Delta^1 = DGL^0(A)$, then $\ker \Delta^n = DGL_n^0(A)$ for all $n \in \bN \cup \{\infty\}$. 
                \end{enumerate}
            \end{lemma}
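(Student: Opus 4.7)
The plan is to prove (1); part (2) will then follow from an entirely parallel argument with $U$ replaced by $GL$, self-adjoint exponentials replaced by general ones, and Lemma \ref{lem:nonstable-isomorphism}(2) used in place of (1). The inclusion $DU_n^0(A) \subseteq \ker\Delta^n|_{U_n^0(A)}$ is automatic since $\Delta^n$ is a homomorphism into an abelian group, so the real content is the reverse inclusion.

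First I would use the surjectivity hypothesis together with Remark \ref{rem:cts-to-piece-wise-smooth}(2) to conclude that $\tD^n(\pi_1(U_n^0(A))) = \rho_A(K_0(A))$ for every $n \in \bN$, so that $\Delta^1$ and $\Delta^n$ both take values in $\Aff T(A)/\rho_A(K_0(A))$. A direct check (surjectivity via $a \in A_{sa} \mapsto e^{2\pi i a}$, injectivity by construction) shows that both maps induce isomorphisms $U^0(A)/\ker\Delta^1|_{U^0(A)} \cong \Aff T(A)/\rho_A(K_0(A)) \cong U_n^0(A)/\ker\Delta^n|_{U_n^0(A)}$, compatibly with the stabilization $v \mapsto v \oplus 1_{n-1}$. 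Invoking the hypothesis $\ker\Delta^1|_{U^0(A)} = DU^0(A)$, the composition
\begin{equation*}
U^0(A)/DU^0(A) \xrightarrow{\alpha} U_n^0(A)/DU_n^0(A) \xrightarrow{\beta} U_n^0(A)/\ker\Delta^n|_{U_n^0(A)},
\end{equation*}
where $\alpha$ is the inclusion-induced map and $\beta$ is the natural quotient, becomes an isomorphism. Since $\beta$ is always surjective, the desired conclusion $\ker\Delta^n|_{U_n^0(A)} = DU_n^0(A)$ is equivalent to $\beta$ being injective, which in turn is equivalent to $\alpha$ being surjective.

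The main step is therefore to show that $\alpha$ is surjective, i.e.\ every $u \in U_n^0(A)$ is congruent modulo $DU_n^0(A)$ to $v \oplus 1_{n-1}$ for some $v \in U^0(A)$. I would pick a piecewise-exponential path from $1$ to $u$ (Remark \ref{rem:cts-to-piece-wise-smooth}(1)) so that $u = \prod_j e^{i a_j}$ for $a_j \in M_n(A)_{sa}$, giving $[u] = \sum_j [e^{i a_j}]$ in the abelian group $U_n^0(A)/DU_n^0(A)$. Each $a_j$ splits as $a_j = a_j^d + a_j^{od}$ into diagonal and off-diagonal parts; since the commuting decomposition $e^{i a_j^d} = \prod_k e^{i (a_j)_{kk} e_{kk}}$ holds and conjugation by the transposition $P_k \in U_n^0(A)$ swapping indices $1$ and $k$ carries $e^{i(a_j)_{kk}} \oplus 1_{n-1}$ to $e^{i(a_j)_{kk} e_{kk}}$ (the conjugating commutator lying in $DU_n^0(A)$), the diagonal contribution lands in the image of $\alpha$.

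The hard part will be handling the off-diagonal contribution: although $a_j^{od}$ has vanishing trace and can be realised as an additive commutator (for example $ih = [e_{11}, i(a e_{12} - a^* e_{21})]$ when $h = a e_{12} + a^* e_{21}$), $e^{i a_j}$ does not split cleanly as $e^{i a_j^d} e^{i a_j^{od}}$ because the summands do not commute, and $e^{i a_j^{od}}$ is not automatically a multiplicative commutator even when its exponent is an additive one. To bridge this gap, I would follow \cite{GongLinXue15} and subdivide the path finely enough that Baker--Campbell--Hausdorff-type estimates let one translate additive-commutator structure into genuine multiplicative commutators inside $U_n^0(A)$, absorbing the resulting error into $DU_n^0(A)$. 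This is the main obstacle; once $\alpha$ is known to be surjective, the diagram chase above immediately delivers (1), and the analogous manipulations with exponentials $e^{a}$, $a \in M_n(A)$, yield (2).
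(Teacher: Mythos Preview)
Your strategy is the same as the paper's. Both arguments reduce to the single claim that every $u\in U_n^0(A)$ is congruent modulo $DU_n^0(A)$ to something of the form $v\oplus 1_{n-1}$ with $v\in U^0(A)$; you phrase this as ``$\alpha$ is surjective'', while the paper simply asserts ``there is some $a\in A_{sa}$ such that $[u]=[e^{2\pi i a}\oplus 1_{n-1}]$'' and cites Remark~\ref{rem:cts-to-piece-wise-smooth}(1). After that, your diagram chase and the paper's explicit construction of the loop $\eta_0$ and the path $\eta_1(t)=e^{2\pi i t a}\eta_0(t)^*$ are two packagings of the same use of the $\pi_1$-surjectivity hypothesis together with $\ker\Delta^1|_{U^0(A)}=DU^0(A)$.

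Two cautions about the part you flag as hard. First, the paper's citation of Remark~\ref{rem:cts-to-piece-wise-smooth}(1) does not by itself justify the key equality $[u]=[e^{2\pi i a}\oplus 1_{n-1}]$ in $U_n^0(A)/DU_n^0(A)$: that remark only produces an exponent in $M_n(A)_{sa}$ whose \emph{trace} is $\Tr(a)$, and equality of determinants only gives equality modulo $\ker\Delta^n$, not modulo $DU_n^0(A)$. The paper is tacitly relying on the ``similar techniques to \cite{GongLinXue15}'' announced before the lemma, so you are right to treat this as the substantive step. Second, be careful with the BCH/subdivision sketch you propose: writing $e^{ia_j}=(e^{ia_j/N})^N$ and peeling off diagonal and off-diagonal pieces via BCH produces error factors $e^{ic_N}$ with $c_N\to 0$, which lands you in $CU_n^0(A)$ rather than $DU_n^0(A)$. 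To get genuine commutators you need an exact identity, not an asymptotic one; make sure the argument you extract from \cite{GongLinXue15} (or an analogue) really yields elements of $DU_n^0(A)$ rather than its closure.

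Minor point: the transposition $P_k$ has scalar determinant $-1$, so it need not lie in $U_n^0(A)$. Replace it by the rotation in the $(1,k)$-plane (or by $\mathrm{diag}(-1,1,\dots,1)P_k\in SU(n)\subseteq U_n^0(A)$), which conjugates $e_{11}$ to $e_{kk}$ just as well and is visibly in the connected component.
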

                \begin{proof}
                  We show (1) holds, (2) is similar. Suppose that $u \in \ker \Delta^n|_{U_n^0(A)}$. There is some $a \in A_{sa}$ such that $[u] = [e^{2\pi i a} \oplus 1_{n-1}]$ and a piece-wise smooth path $\xi:[0,1] \to U_n^0(A)$ with $\tD(\xi) = \Tr(a)$ by Remark \ref{rem:cts-to-piece-wise-smooth}(1).

                  As $u \in \ker\Delta^n|_{U_n^0(A)}$, there is some piece-wise smooth loop $\eta:[0,1] \to U_n^0(A)$ with
                  \begin{equation} \tD^n(\xi) = \tD^n(\eta). \end{equation}
                  As before, the surjectivity of $\pi_1(U^0(A)) \to K_0(A)$ implies that $\eta$ is homotopic to $\eta_0 \oplus 1_{n-1}$ for some piece-wise smooth loop $\eta_0: [0,1] \to U^0(A)$. Then $\eta_1(t) := e^{2\pi i ta}\eta_0(t)^*$ defines a piece-wise smooth path in $U^0(A)$ from 1 to $e^{2\pi i a}$ such that $\tD(\eta_1) = 0$. Therefore $e^{2\pi i a} \in \ker \Delta^1|_{U^0(A)} = DU^0(A)$ and consequently
                  \begin{equation} [u] = [e^{2\pi i a} \oplus 1_{n-1}] = 0 \text{ in  } U_n^0(A)/DU_n^0(A). \end{equation}
                \end{proof}

      Now we finish by showing that we can work outside of the connected component.

        \begin{theorem}
          Let $A$ be a unital C*-algebra such that
            \begin{enumerate}
              \item the canonical map $\pi_1(U^0(A)) \to K_0(A)$ is surjective;
              \item the canonical map $U(A)/U^0(A) \to K_1(A)$ is an isomorphism.
            \end{enumerate}
          Then the following is true.
          \begin{enumerate}
            \item If $\ker\Delta^1|_{U^0(A)} = DU^0(A)$, then $U(A)/DU(A) \simeq \ku(A)$.
            \item If $\ker\Delta^1 = DGL^0(A)$, then $GL(A)/DGL(A) \simeq \ka(A)$.
            \end{enumerate}
        \begin{proof}
          We show this in the unitary setting. 
          First we again note that $DU_{\infty}^0(A) = DU_{\infty}(A)$ by (\ref{eq:3x3-trick}), and so $\ku(A) = U_{\infty}/DU_{\infty}^0(A)$.
          Moreover, (2) implies that $A$ is $K_1$-injective, giving that $DU(A) = DU^0(A)$ and $DGL(A) = DGL^0(A)$ as well.
          Using property (1), together with the fact that $\ker \Delta^1|_{U^0(A)} = DU^0(A)$, gives that the canonical map
          \begin{equation}\label{eq:nonstable-iso}
              U^0(A)/DU^0(A) \simeq U_{\infty}^0(A)/DU_{\infty}^0(A)
            \end{equation}
            is an isomorphism by combining Lemmas \ref{lem:nonstable-isomorphism} and \ref{lem:non-stable-kernel}.
            Now combining (2) with (\ref{eq:nonstable-iso}) yields a morphism of short exact sequences
              \begin{equation}
                \begin{tikzcd}\label{eq:morph-ses}
0 \arrow[r] & U^0(A)/DU(A) \arrow[r] \arrow[d, "\simeq"] & U(A)/DU(A) \arrow[r] \arrow[d] & U(A)/U^0(A) \arrow[r] \arrow[d, "\simeq"] & 0 \\
0 \arrow[r] & U_{\infty}^0(A)/DU_{\infty}(A) \arrow[r]   & \ku(A) \arrow[r]               & K_1(A) \arrow[r]                          & 0
\end{tikzcd}
              \end{equation}
              where the left and right vertical maps are isomorphisms. Therefore the middle vertical map is an isomorphism by the Short Five Lemma \cite[Lemma I.3.1]{Maclane12}.
              The argument in the general linear setting is the same with $U$ replaced by $GL$ and $\ku$ replaced with $\ka$.
        \end{proof}
        \end{theorem}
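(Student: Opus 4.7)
My plan is to reduce the non-stable statements to the stable ones (already handled by the main decomposition results and the earlier lemmas) using $K_1$-injectivity and the Short Five Lemma. I will describe the unitary case; the general linear case goes through with $U$ replaced by $GL$ and $\ku$ replaced by $\ka$.

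First I would set up the stable picture. By Whitehead's lemma (the $3\times 3$ trick in \eqref{eq:3x3-trick}), $DU_\infty^0(A) = DU_\infty(A)$, so $\ku(A) = U_\infty(A)/DU_\infty^0(A)$, which is the object I want to compare with $U(A)/DU(A)$. Next I would use hypothesis (2), namely that $U(A)/U^0(A) \to K_1(A)$ is an isomorphism. This forces $A$ to be $K_1$-injective (every element of $U^0(A)$ represents $0$ in $K_1$, but more importantly, every commutator $uvu^{-1}v^{-1}$ in $U(A)$ maps to $0$ in $K_1(A)$, hence lies in $U^0(A)$). In particular $DU(A) \subseteq U^0(A)$, and so $DU(A) = DU^0(A)$.

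With this identification in hand, I would combine the hypothesis $\ker\Delta^1|_{U^0(A)} = DU^0(A)$ with Lemma \ref{lem:non-stable-kernel}(1) to upgrade the kernel identification to every finite level, and then with Lemma \ref{lem:nonstable-isomorphism}(1) to conclude that the canonical map
\begin{equation}
U^0(A)/DU^0(A) \to U_\infty^0(A)/DU_\infty^0(A)
\end{equation}
is an isomorphism (here both hypotheses (1) on $\pi_1(U^0(A)) \to K_0(A)$ and on $\ker\Delta^1$ feed in). Combined with $DU(A) = DU^0(A)$ and $DU_\infty(A) = DU_\infty^0(A)$, this identifies the left-hand terms of the two natural short exact sequences
\begin{equation}
0 \to U^0(A)/DU(A) \to U(A)/DU(A) \to U(A)/U^0(A) \to 0
\end{equation}
and
\begin{equation}
0 \to U_\infty^0(A)/DU_\infty(A) \to \ku(A) \to K_1(A) \to 0.
\end{equation}

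Finally, I would assemble the natural vertical maps between these two sequences. The left vertical is an isomorphism by the previous paragraph, the right vertical is an isomorphism by hypothesis (2), and the Short Five Lemma then gives the desired isomorphism $U(A)/DU(A) \simeq \ku(A)$. I expect the main obstacle to be the bookkeeping: one has to keep straight the four variants $DU(A)$, $DU^0(A)$, $DU_\infty(A)$, $DU_\infty^0(A)$, and invoke $K_1$-injectivity at precisely the right moment so that the two left-hand quotients in the diagram match. Once that is sorted out, the conclusion is a routine diagram chase.
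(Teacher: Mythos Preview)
Your proposal is correct and follows essentially the same approach as the paper's proof: both reduce to the Short Five Lemma applied to the natural morphism between the two short exact sequences, after using the $3\times 3$ trick to identify $DU_\infty(A)=DU_\infty^0(A)$, hypothesis~(2) to identify $DU(A)=DU^0(A)$, and Lemmas~\ref{lem:nonstable-isomorphism} and~\ref{lem:non-stable-kernel} together with hypothesis~(1) to identify the connected-component quotients.
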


        \begin{remark}\label{R:nonstablehausdorffizedgroup}
            If $A$ is unital, with the assumptions
              \begin{enumerate}
                \item $\pi_1(U^0(A)) \to K_0(A)$ is surjective and
                \item $U(A)/U^0(A) \to K_1(A)$ is an isomorphism,
              \end{enumerate}
              we also get that
                \begin{equation}
                  U(A)/CU(A) \simeq \hku(A) \text{ and } GL(A)/CGL(A) \simeq \hka(A),
                \end{equation}
                even as topological groups. Indeed, looking at the unitary case for example, since the map
                  \begin{equation}
                    U^0(A)/CU^0(A) \simeq U_{\infty}^0(A)/CU_{\infty}^0(A)
                  \end{equation}
                  is a topological group isomorphism by means of (\ref{eq:Thomsen-isomorphism}), it follows that the map
                    \begin{equation}
                      U(A)/CU^0(A) \to \hku(A)
                    \end{equation}
                    is open as it will send open small neighbourhoods of the identity to open neighbourhoods (as sufficiently small neighbourhoods will be connected to the identity). 

                    Again, as the map $U(A)/U^0(A) \to K_1(A)$ is injective, we also have that $DU(A) \subseteq U^0(A)$.
                    Thus $DU^0(A) = DU(A)$ and $CU^0(A) = CU(A)$ in this case.
          \end{remark}

    Finally we finish by stating that unital C*-algebras satisfying
      \begin{enumerate}
        \item $\pi_1(U^0(A)) \to K_0(A)$ is surjective and
        \item $U(A)/U^0(A) \to K_1(A)$ is an isomorphism
      \end{enumerate}
      are very common. Indeed, this includes the class of stable rank one C*-algebras \cite[Theorem 3.3]{Rieffel87}, $\cZ$-stable C*-algebras \cite[Theorem 3]{Jiang97}, and tensor products with coronas over $\sigma$-unital C*-algebras \cite[Theorem 4.9]{Thomsen91}.

\bibliographystyle{amsalpha}
\bibliography{biblio}

\end{document}